\documentclass{amsart}[12pt]

\usepackage{hyperref}
\usepackage{etoolbox}% http://ctan.org/pkg/{relsize,etoolbox}
\usepackage{mathrsfs}
\usepackage{graphicx}
\usepackage{color,latexsym,amsfonts,amsthm,amsmath,amssymb,path,enumitem}
\AtBeginEnvironment{quote}{\smaller}% Step font down one size relative to current font.

\voffset=5mm
\oddsidemargin=22pt \evensidemargin=22pt
\headheight=9pt     \topmargin=24pt
\textheight=600pt   \textwidth=413.pt

% Theorem environments with italic font
\newtheorem{thm}{Theorem}[section]
\newtheorem{lem}[thm]{Lemma}

\newtheorem{rem}[thm]{Remark}

% Theorem environments with roman or slanted font

%%%%%%%%%%%%%%%%%%%%%%%%%%%%%%%%%%%%%%%%%%%%%%%%%%%%%%%%%%  NUMBERS
\usepackage{dsfont}
  % The real numbers.
\newcommand{\Z}{\mathbb{Z}}

\def\nin{\noindent}

\def\.{\hskip.06cm}
\def\ts{\hskip.03cm}

\def\.{\hskip.06cm}
\def\ts{\hskip.03cm}
\def\mts{\hspace{-.04cm}}

\def\zz{\mathbb Z}

\def\tT{\mathbb T}

\def\Ga{R}

\def\<{\langle}
\def\>{\rangle}

\def\pt{\partial}

\def\SP{{{\rm{\textsf{\#P}}}}}

\def\NP{{{\rm{\textsf{NP}}}}}

\begin{document}
\title[Fast domino tileability]{Fast domino tileability}

\author[Igor~Pak]{ \ Igor~Pak$^\star$}
\author[Adam~Sheffer]{ \ Adam~Sheffer$^\dagger$}
\author[Martin~Tassy ]{ \ Martin~Tassy$^\star$}

\begin{abstract}
\emph{\mts Domino tileability} \ts is a classical problem in Discrete Geometry,
famously solved by Thurston for simply connected regions in nearly
linear time in the area.  In this paper, we improve upon Thurston's
height function approach to a nearly linear time in the perimeter.
\end{abstract}

\thanks{\thinspace ${\hspace{-.45ex}}^\star$Department of Mathematics,
UCLA, Los Angeles, CA, 90095.
\hskip.06cm
Email:
\hskip.06cm
\texttt{\{pak,\ts{}mtassy\}@math.ucla.edu}}

\thanks{\thinspace ${\hspace{-.001ex}}^\dagger$Department of Mathematics,
Caltech, Pasadena, CA, 91125. \hskip.06cm
Email:
\hskip.06cm
\texttt{adamsh@gmail.com}.}

\maketitle

\vskip.9cm

\section{Introduction}\label{s:intro}

\nin
Given a region $\Ga$ and a set of tiles~$T$, decide whether~$\Ga$
is tileable with copies of the tiles in~$T$.  This is a classical
\emph{tileability problem}, occupying a central stage in Discrete
and Computational Geometry.  For general sets of tiles, this is
a foundational problem in Computability~\cite{Ber,Boas} and Computational
Complexity~\cite{GJ,Lev,V1}.  For domino tiles, the problem is a special
case of the \emph{Perfect Matching} problem.  It can be solved in
polynomial time; even the counting problem can be solved at the
cost of matrix multiplication (see e.g.~\cite{LP,Ken}).

In~1990, Thurston pioneered a new approach to the subject based
on the study of \emph{height functions}, which can be viewed as
integer maps on the regions~\cite{Thu}.  Thurston outlined a
domino tileability algorithm which later has been carefully
analyzed (see~$\S$\ref{ss:fin-cs}) and significantly extended
to many other tileability problems (see~$\S$\ref{ss:fin-tile}).

\begin{thm}[Thurston, 1990] \label{t:thurston}
Let $\Ga$ be a simply connected region in the plane~$\zz^2$, and
let $n = |\Ga|$ be the area of $\Ga$.
There exists an algorithm that decides tileability of~$\Ga$
in time $O(n\log n)$.
\end{thm}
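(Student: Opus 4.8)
The plan is to follow Thurston's height-function approach. First I would set up the combinatorial framework: color $\zz^2$ as a checkerboard, and to any domino tiling of $\Ga$ associate an integer-valued \emph{height function} $h$ defined on the vertices of the grid lying in (the closure of) $\Ga$. The defining local rule is that as one traverses an edge of the grid with a black square on the left, $h$ increases by $1$ if the edge is not crossed by a domino and decreases by $3$ if it is crossed; one checks this is consistent around every unit square (the four increments sum to $0$ in both the tiled and untiled cases), so $h$ is well defined up to a global additive constant once $\Ga$ is simply connected. The key structural fact is that the boundary $\pt\Ga$ alone determines a candidate height function $h_{\pt}$ on $\pt\Ga$ (traverse the boundary cycle applying the $+1/-3$ rule, which is forced since no domino can cross $\pt\Ga$), and $\Ga$ is tileable \emph{if and only if} $h_{\pt}$ extends to an integer function on all of $\Ga\cap\zz^2$ satisfying the local constraints --- equivalently, iff the pointwise maximal such extension $h_{\max}$ (obtained by propagating the constraint $h(v)\le h(w)+1$ inward from the boundary, essentially a shortest-path / BFS computation in the constraint graph) agrees with $h_{\pt}$ on $\pt\Ga$ and is consistent everywhere. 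One then recovers the tiling, if it exists, by reading off from $h_{\max}$ which grid edges are crossed.

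The algorithm therefore has three phases: (1) compute $h_{\pt}$ on the boundary by one traversal, checking it is single-valued (this already detects some non-tileable regions); (2) compute $h_{\max}$ on all of $\Ga\cap\zz^2$; (3) verify consistency and output the tiling. Phases (1) and (3) are clearly $O(n)$. For phase (2), the point is that $h_{\max}(v)$ equals $\min_{w\in\pt\Ga}\bigl(h_{\pt}(w) + \mathrm{dist}(v,w)\bigr)$ for an appropriate edge-weighted distance on the grid graph restricted to $\Ga$; since the increments lie in a bounded set, this is a single-source-type shortest path problem with small integer weights, solvable by a Dijkstra variant or a bucketed BFS in time $O(n\log n)$ (the $\log n$ coming from the priority queue, or from sorting the boundary values). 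Correctness of "tileable $\iff$ $h_{\max}$ consistent" is the mathematical core: one direction is immediate since any tiling's height function is such an extension and is dominated by $h_{\max}$; for the converse one must show that a consistent integer extension can always be turned into a tiling, which follows because at every grid edge the values of $h$ at its two endpoints differ by exactly $1$ or exactly $3$ (never more, by consistency, and the parity forces one of these two), and "$3$" edges are precisely the domino crossings, which assemble into a valid tiling.

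The main obstacle I expect is not the algorithmic bookkeeping but establishing the equivalence cleanly --- in particular proving that the extremal extension $h_{\max}$ is \emph{lattice}-valued and that consistency of $h_{\max}$ is both necessary and sufficient, which rests on a sublattice property of the set of valid height functions (pointwise min and max of two valid height functions are again valid) together with a parity argument pinning down each edge increment to $\{+1,-1,+3,-3\}$ and then to $\{\pm1\}$ for boundary-compatible ones. A secondary technical point is handling the graph-theoretic reduction so that the shortest-path computation genuinely runs in $O(n\log n)$ rather than $O(n\log n + \text{poly})$: one must argue that the relevant grid region, though of area $n$, has the constraint graph of size $O(n)$ and bounded degree, so Dijkstra with a binary heap suffices. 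With these in hand the $O(n\log n)$ bound of Theorem~\ref{t:thurston} follows.
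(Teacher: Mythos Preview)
The paper does not actually prove Theorem~\ref{t:thurston}; it is stated in the introduction as Thurston's result and cited to~\cite{Thu}, with the height-function machinery recalled in Section~\ref{s:tileability} (Lemma~\ref{le:CorrespondinghFunc} and the formula~\eqref{eq:hmax} for $h_{\max}$) serving as background for the paper's own Theorem~\ref{t:main}. So there is no in-paper proof to compare against.

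That said, your outline is the standard Thurston argument and is consistent with what the paper records: your extremal extension is exactly the paper's $h_{\max}(y)=\min_{x\in\partial R}\bigl(h(x)+\alpha(x,y)\bigr)$, your ``distance'' is the paper's $\alpha(x,\cdot)$, and your three-condition characterization of valid height functions is Lemma~\ref{le:CorrespondinghFunc}. The Dijkstra/bucketed-BFS justification for the $O(n\log n)$ running time is the accepted one. One small point worth tightening: you should check that your $h_{\max}$ formula restricts the minimum to boundary points $x$ reachable from $y$ by a path inside $R$ (the paper's $x\sim_R y$ condition), since for nonconvex $R$ the unrestricted $\alpha$-distance need not reflect the constraints propagated through~$R$; this is exactly the issue handled in the proof of Lemma~\ref{l:necessary}.
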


This result is worth comparing to the classical Hopcroft--Karp
algorithm which has $O(n^{3/2})$ time for testing whether
a bipartite graph with $n$ vertices and bounded degree
has a perfect matching.  This general bound has been
significantly improved in recent years (see~$\S$\ref{ss:fin-match}).

Note that for polynomial time problems the cost of the algorithm
depends heavily on how the input is presented.   In case of
graphs, the input is a list of vertices and edges, of size~$\Theta(n)$.
On the other hand, the region $\Ga$ is traditionally presented
as a list of squares, thus of size $\Theta(n \log n)$.

\smallskip

The main idea behind this work is that plane regions $\Ga$ can
be presented by the set of boundary squares.  The input then has
size $\Theta(p \log p)$, where $p = |\pt \Ga|$ is the perimeter of~$\Ga$.
More economically, for simply connected~$\Ga$ the input can be presented
as a sequence of directed boundary edges \textsc{Right}, \textsc{Up},
\textsc{Left} and \textsc{Down}, starting at the origin.  Of course,
in this case the boundary set of squares can be computed in time
$\Theta(p \log p)$.  Either way, one can ask if in this presentation
one can improve upon Thurston's algorithm.  Here is our main result:

\begin{thm} \label{t:main}
Let $\Ga$ be a simply connected region in the plane~$\zz^2$, and
let $p = |\pt \Ga|$ be the perimeter of $\Ga$.
There exists an algorithm that decides tileability of~$\Ga$
in time $O(p \ts\log\mts p)$.
\end{thm}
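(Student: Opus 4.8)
The plan is to replace Thurston's explicit construction of a height function on all of $\Ga$ — which inherently costs $\Omega(n)$ time and space — by a computation that takes place only on the boundary $\pt\Ga$. Recall the structure of Thurston's argument: one orients the lattice edges according to the checkerboard coloring, so that each unit square has a boundary cycle of net height change $0$; a domino tiling exists if and only if the height function, which is forced along $\pt\Ga$ (up to an additive constant, and well-defined precisely because $\Ga$ is simply connected and balanced in colors), extends to an integer-valued function on all vertices of $\Ga$ satisfying the local $\pm 1 / \mp 3$ step condition. Thurston showed that when such an extension exists, the \emph{pointwise maximum} of all valid height functions is itself a valid height function, and it is obtained explicitly as $h_{\max}(v) = \min_{w\in\pt\Ga} \big(h_{\pt\Ga}(w) + d(w,v)\big)$, where $d$ is the weighted lattice distance induced by the edge orientations. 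Tileability then reduces to checking that this $h_{\max}$ (equivalently $h_{\min}$, the analogous pointwise minimum) satisfies the step condition everywhere, or equivalently that $h_{\max}\ge h_{\min}$ with the correct parities.

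First I would make the boundary data canonical: given the $O(p)$-length sequence of directed edges \textsc{Right}, \textsc{Up}, \textsc{Left}, \textsc{Down}, in $O(p\log p)$ time compute the induced height values on the $\Theta(p)$ boundary vertices, check color-balance and consistency (else output ``not tileable''), and sort the boundary vertices so that one can do point location / nearest-boundary-vertex queries. The key structural step is then to show that one does \emph{not} need the value of $h_{\max}$ at every interior vertex: the obstruction to tileability, if present, is ``visible from the boundary.'' Concretely, I would argue that $h_{\max}$ fails the step condition — or the inequality $h_{\max}\ge h_{\min}$ fails — at some vertex $v$ only if there are two boundary vertices $w_1,w_2$ whose forced heights are ``too far apart'' relative to their positions, i.e.\ $h_{\pt\Ga}(w_1) - h_{\pt\Ga}(w_2) > d(w_2,w_1)$ along paths staying in $\Ga$, or a nearby localized discrepancy. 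Thus one reduces to a purely boundary-to-boundary feasibility check: the region is tileable iff for every ordered pair of boundary vertices $w_1,w_2$ the forced height difference does not exceed the minimal weighted in-region distance, \emph{and} a parity condition holds. This is a shortest-path / tension problem on a graph whose ``terminals'' are the $\Theta(p)$ boundary vertices.

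The main obstacle — and the technical heart of the paper — is computing, or certifying a violation of, these in-region distances $d(w_2,w_1)$ between boundary vertices fast enough, since the ambient graph still has $\Theta(n)$ vertices. I would handle this by exploiting the planar, rectilinear, simply connected geometry: the relevant geodesics in the oriented lattice are essentially monotone staircase paths, so $d(w,v)$ has a closed form (an $L^1$-type expression twisted by the orientation/coloring) as long as the straight staircase from $w$ to $v$ lies inside $\Ga$. The difficulty is geodesics forced to ``go around'' reflex features of $\pt\Ga$. For those I would build, in $O(p\log p)$ time, a decomposition of $\Ga$ into $O(p)$ rectangular/staircase pieces (a trapezoidal-type decomposition of the rectilinear polygon) together with the ``staircase visibility'' structure among boundary vertices, reducing in-region geodesic distance to shortest paths in an auxiliary planar graph of size $O(p)$, on which one runs a single-source-style computation. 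Because the auxiliary graph is planar with $O(p)$ vertices, all-pairs-among-terminals distance testing (or detecting one bad pair) can be done within the $O(p\log p)$ budget using planar shortest-path machinery. Assembling these pieces — boundary preprocessing, the ``obstruction is visible from the boundary'' lemma, the rectilinear-geodesic closed form, the $O(p)$ decomposition, and the planar distance computation — and verifying each stays within $O(p\log p)$, yields Theorem~\ref{t:main}.
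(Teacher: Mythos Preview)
Your high-level plan matches the paper's: compute the forced boundary heights in $O(p)$ time, reduce tileability to a ``no bad pair'' condition of the form $h(y)-h(x)\le \alpha(x,y)$, and verify this via a single Dijkstra-style pass on an auxiliary graph of size $O(p)$. Where you are vague is exactly where the real work lies, and your concrete suggestion for that step is likely wrong.

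The gap is in the auxiliary graph. You assert that a trapezoidal-type (axis-aligned) decomposition of the rectilinear polygon into $O(p)$ pieces, plus ``planar shortest-path machinery,'' suffices; but you never say what the edge set is, nor why distances in that graph recover the height-function distance $\alpha$. The paper's key observation is that the relevant geodesics are $L^\infty$-type --- each step moves to a diagonally adjacent lattice point and increases $\|\cdot\|_\infty$ by one --- so the geodesic hull $G(x,y)$ between two points is a rectangle with sides of slope $\pm 1$, not an axis-aligned box. Accordingly, the paper does \emph{not} use an axis-aligned trapezoidal decomposition; it builds a quadtree-style subdivision of $\Ga$ into $O(p)$ \emph{rotated} ($45^\circ$) squares (plus boundary right triangles), takes $S$ to be $\partial\Ga$ together with all their corners, and proves the sparsity lemma directly: for every $x\in S$ there are at most eight $y\in S$ with $G(x,y)\setminus\{x,y\}$ disjoint from $S$. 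That bounded-degree statement is what makes the Dijkstra run in $O(p\log p)$, and it hinges on the squares being aligned with the geodesic directions. With an axis-aligned decomposition the analogue of $\approx_S$ need not be sparse, and generic planar shortest-path black boxes on a graph with $p$ terminals do not give you the required feasibility check in $O(p\log p)$.

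The second thing you leave unproved is the reduction itself: why does checking only the pairs $x\approx_S y$ (your ``obstruction is visible from the boundary'' lemma) suffice? The paper isolates this as a separate theorem: it first shows (Lemma~\ref{l:necessary}) that tileability is equivalent to the inequality for all boundary pairs $x\sim_R y$ admitting a geodesic inside $\Ga$, and then (Theorem~\ref{tilability}) that adding any set of interior ``Steiner'' points $S\supseteq\partial\Ga$ and checking only the $\approx_S$-pairs is equivalent, by additivity of $\alpha$ along geodesics. Your proposal gestures at this but does not supply the argument, and without it the auxiliary-graph computation is not known to decide tileability.
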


The result gives the first improvement over Thurston's algorithm
in~25 years, and is nearly optimal in this presentation.
Clearly, the perimeter $p= \Omega(\sqrt{n})$ can be significantly
smaller than~$n$, so Theorem~\ref{t:main} improves upon Thurston's
for all regions with $p = o(n)$.

\smallskip

Let us note that Thurston's algorithm not only decides domino
tileability, but also constructs a domino tiling when the region
is tileable.  Specifically, Thurston shows that for every tileable
region~$\Ga$ there is a unique \emph{maximum tiling} $T_{\circ}$,
corresponding to the \emph{maximum height function} of the region~$\Ga$.
He then inductively computes $T_\circ$ in time $O(n \log n)$.
Clearly, it would be unhelpful to match this result, since listing
all dominoes requires $\Omega(n\log n)$ time.  However, we can do this in
the following oracle model.

A tiling $T$ of a region $\Ga$ is said to be \emph{site-computable}
with a \emph{query cost}~$t$ if after preprocessing, for every square
$x \in \Ga$, we can compute the adjacent square~$y$ of a domino in~$T$
in time~$t$.  See~$\S$\ref{ss:fin-oracle} for the reasoning behind
this model.

\begin{thm} \label{t:main-inter}
Let $\Ga$ be a simply connected region in the plane~$\zz^2$, and
let $p = |\pt \Ga|$ be the perimeter of $\Ga$.  If $\Ga$ is tileable
with dominoes, the maximum tiling $T_{\circ}$ is site-computable
with a preprocessing time $O(p \log p)$, and a
query time $O(\log^2 p)$.
\end{thm}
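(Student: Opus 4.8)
The plan is to leverage the structure established for Theorem 1.2, where the region is encoded by its $\Theta(p\log p)$ boundary, and to observe that the maximum height function $h_\circ$ is determined locally once one knows its values on the boundary $\pt\Ga$. First I would recall Thurston's characterization: a height function is the restriction of a function on vertices that changes by $\pm 1$ along each edge, with the sign fixed by the fixed two-coloring of $\zz^2$ except that edges crossed by a domino are reversed; the maximum height function is the pointwise maximum over all tilings, and $h_\circ(v)$ at an interior vertex $v$ equals the minimum over all monotone lattice paths from the boundary to $v$ of $h_\circ(\text{start}) + (\text{signed length of the path})$. The boundary values of $h_\circ$ are forced (they are the unique height function on $\pt\Ga$, computed in the course of the tileability test), and in $O(p\log p)$ preprocessing we store them in a balanced search structure keyed by position along the boundary walk.

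The heart of the matter is turning the ``minimum over lattice paths'' formula into an $O(\log^2 p)$ query. The key step is to show that $h_\circ(v)$ is achieved by a path that first travels in straight line from some boundary vertex $u$ to $v$, so that $h_\circ(v) = \min_{u\in\pt\Ga}\bigl(h_\circ(u) + d_{\mathrm{sgn}}(u,v)\bigr)$ where $d_{\mathrm{sgn}}$ is the signed distance along an axis-monotone staircase — and, crucially, that the minimizing $u$ can be located by a geometric search. I would partition the boundary into $O(p)$ edges, precompute (via a sweep or a segment-tree / fractional-cascading structure over the boundary) for each of the four ``cone directions'' a data structure that, given a query point $v$, returns in $O(\log p)$ or $O(\log^2 p)$ time the boundary vertex minimizing the relevant affine functional; the $\log^2 p$ arises from a two-level search (one level to isolate the O(1) relevant boundary arcs visible from $v$, a second to minimize within an arc). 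Once $h_\circ(v)$ is known for the four vertices of a unit square, the matching partner $y$ of any square $x\in\Ga$ is read off in $O(1)$ from the local configuration of height differences around $x$, since in the maximum tiling the domino through $x$ points in the direction of the edge whose height drop is forced to be maximal; thus the query cost is dominated by the four height evaluations, i.e.\ $O(\log^2 p)$.

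The main obstacle I anticipate is the correctness of the ``straight-line / visible boundary arc'' reduction: a priori the optimal lattice path realizing $h_\circ(v)$ can wind around inside $\Ga$, and for non-convex simply connected regions the set of boundary vertices relevant to a given interior $v$ need not be a single contiguous arc. I would handle this by exploiting that $h_\circ$ is $1$-Lipschitz in the path metric of $\Ga$ and that, by Thurston's local max/min moves, any path can be shortcut to one that is axis-monotone on each side of a single turn without increasing its signed length; combined with the fact that inside a simply connected region the shortest axis-monotone staircase to $v$ touches the boundary in a bounded-complexity way, this should localize the search. A secondary technical point is making the preprocessing genuinely $O(p\log p)$ rather than $O(p\log^2 p)$: this needs the search structures to be built by a single sweep over the boundary walk with fractional cascading across the four directional trees, which is standard but must be stated carefully. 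I expect the write-up to isolate a lemma ``$h_\circ(v)$ equals an affine-minimum over $O(1)$ boundary arcs computable in $O(\log^2 p)$'' as the crux, with everything else following from Theorem 1.2 and $O(1)$ local post-processing.
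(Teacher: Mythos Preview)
Your proposal has a genuine gap at exactly the point you flag as the ``main obstacle.'' The formula $h_{\max}(v) = \min_u\bigl(h(u) + \alpha(u,v)\bigr)$ (the paper's equation~\eqref{eq:hmax}) requires the minimum to range only over boundary points $u$ with $u \sim_R v$, i.e.\ for which some geodesic path from $u$ to $v$ lies inside~$R$. This constraint cannot be dropped: in a U-shaped region, boundary points $u,v$ on opposite arms may be $\ell_\infty$-close while $h_{\max}(v)$ is far larger than $h(u) + \alpha(u,v)$, so including such $u$ in the minimum gives the wrong value. Your proposed fix --- shortcutting paths to ``axis-monotone with a single turn'' --- does not address this, because the issue is not the shape of a path but whether any geodesic from $u$ to $v$ stays in $R$ at all. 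For general simply connected~$R$ the set $\{u \in \partial R : u \sim_R v\}$ need not decompose into $O(1)$ contiguous arcs, and you give no mechanism to identify it at query time; the segment-tree / fractional-cascading structure you describe optimizes the affine functional over the \emph{whole} boundary, which is the wrong domain.

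The paper avoids the visibility problem entirely by a different route. In preprocessing it runs the full algorithm of Theorem~\ref{t:main}, obtaining the rotated-square subdivision of Theorem~\ref{th:Subdivision} together with $h_{\max}$ at every vertex of the subdivision~$S$, and builds a point-location structure on this subdivision. A query $x$ is answered by locating the square of~$S$ containing $x$ in $O(\log p)$ time and then repeatedly quartering that square; at each level the $O(1)$ new vertices have their $h_{\max}$ values computed from their $\approx_S$-neighbours, found in $O(\log p)$ via the sorted arrays on lines of slope~$\pm 1$ already built in the proof of Theorem~\ref{t:main}. After $O(\log p)$ levels the eight lattice points surrounding $x$ all carry height values and the domino through $x$ is read off locally. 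The subdivision vertices act as precomputed relay stations, so no direct boundary-to-interior minimum is ever taken and the $\sim_R$ constraint never has to be resolved at query time.
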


\smallskip

The rest of the paper is structured as follows.  In the next lengthy
Section~\ref{s:tileability} we present a criterion for domino
tileability in terms of the height function on the boundary.
The algorithm is presented in the following Section~\ref{s:algorithm}.
We then show that similar results also hold for a triangular lattice
(Section~\ref{s:other}).  We conclude with final remarks and open
problems in Section~\ref{s:fin}.

\bigskip

\section{Tileability theorems} \label{s:tileability}

\nin
In this section we present necessary and sufficient conditions for the domino tileability of simply connected regions in $\Z^{2}$.
In the following section, we use these conditions to construct an algorithm for checking the tileability of a simply connected region.
For the rest of this section we assume that $\Z^{2}$ is endowed with a chessboard coloring, such that the square with corners $(0,0)$ and $(1,1)$ is white.

Let $R$ be a simply connected region of $\Z^{2}$ such that the origin is on the boundary of $R$ and  $T_{R}$ is a tiling of $R$.
A classic result (for example, see Fournier \cite{Fou}) states that there exists a height function $h:R \rightarrow \Z$ that corresponds to $T_{R}$, and is defined as follows:
\begin{itemize}
\item $h(0,0)=0$, and
\item for an edge $(x,y)$ in $T_{R}$, such that when crossing from $x$ to $y$ there is a white square to our left, we have $h(x)-h(y)=1$.
\end{itemize}

We denote by $\partial R$ the points of $\Z^2$ that are on the boundary of $R$. To obtain a height function $h: \partial R \rightarrow \Z$, we start from $(0,0)$ and travel along the boundary. We begin by setting $h(0,0)=0$ and then place a height value to each point that we visit, according to the second condition of the definition. That is, when we cross an edge from point $x$ to point $y$, if there is a white square to our left we set $h(y)=h(x)-1$, and otherwise $h(y)=h(x)+1$. If $R$ is not tileable, the final edge in our trip might not satisfy the condition. When this happens we say that $\partial R$ has no valid height function.

The following well known lemma characterizes the functions $h:R\to \Z$ that are height functions of some tiling of~$R$.

\begin{lem}[see~\cite{Fou}] \label{le:CorrespondinghFunc}
Let $R$ be a simply connected region of $\Z^{2}$ with the origin on its boundary, and let $\mathscr{H}_{R}$ be the set of height functions that correspond to tilings of $R$. Then a function $h:R \rightarrow \Z$ is in $\mathscr{H}_{R}$ if and only if the following conditions hold.
\begin{enumerate}[label={(\roman*)}]
\item For every $x,y \in R$ such that $x=(a_{1},b_{1})$, $y=(a_{2},b_{2})$, and $a_{1}-a_{2} =b_{1}-b_{2} =0 \mod 2$, we have $h(x)-h(y) = 0 \mod 4$.
\item For every edge $(x ,y) \in \partial R$ such that when crossing from $x$ to $y$ there is a white square to our left, we have $h(x)-h(y)= 1$.
\item For every edge $(x,y)\in R$, we have $|h(x)-h(y)|\leq 3$.
\end{enumerate}
\end{lem}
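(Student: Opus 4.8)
The plan is to prove both implications directly from the definition of the height function $h_{T_R}$ of a tiling, following the classical line of argument (cf.~\cite{Fou}).

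For the forward direction, assume $h = h_{T_R}$ for some tiling $T_R$. Condition (ii) should be essentially immediate: an edge of $\partial R$ can never be the edge shared by the two squares of a domino of $T_R$, since both of those squares lie in $R$ and hence that edge is interior; thus every boundary edge is an edge of $T_R$, and (ii) is just the defining relation of $h_{T_R}$ restricted to $\partial R$. For (iii), I would note that along an edge of $T_R$ the change of $h$ is $\pm 1$ by definition, and that along the interior edge of a domino the change is $\mp 3$ --- this follows by going once around the six boundary edges of that domino (all of which are edges of $T_R$) and using that the total change is $0$; hence $|h(x)-h(y)|\le 3$ on every edge. For (i), the key observation is that along \emph{any} edge $(x,y)$, whether or not it is a domino's interior edge, the change $h(y)-h(x)$ is $\equiv \pm1 \pmod 4$ with the sign depending only on the direction of the edge and on the parity class $(a \bmod 2, b\bmod 2)$ of $x=(a,b)$: a non-domino edge contributes $\pm1$ and the matching domino edge contributes $\mp3$, and $-3\equiv 1$ and $3 \equiv -1 \pmod 4$. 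Therefore $h \bmod 4$ is a fixed function of the parity class, and since the two vertices in (i) lie in the same class, (i) follows.

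For the converse, assume $h$ satisfies (i)--(iii). By (i), $h \bmod 4$ is constant on each of the four parity classes of $\Z^2$; call these constants $c_C$. Walking once around $\partial R$ --- a single closed loop, since $R$ is simply connected --- and applying (ii) to each boundary edge, I would deduce that $c_C - c_{C'} \equiv \pm1 \pmod 4$ whenever $C,C'$ are joined by an edge; since the four classes joined by edges form a $4$-cycle, this forces $\{c_C\}$ to be, up to an additive constant, the values $0,1,2,3$ in cyclic order around that cycle. In particular every edge-change is odd, hence by (iii) it lies in $\{-3,-1,1,3\}$. Next, for each unit square $s$ of $R$, its four corners lie one in each parity class and are met in cyclic order as one traverses $\partial s$, so the four consecutive $h$-changes around $s$ are all $\equiv +1 \pmod4$ (for a fixed orientation), i.e.\ each equals $+1$ or $-3$; as they sum to $0$, exactly one equals $-3$. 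Thus every square has a unique ``cut edge'' with $|h(x)-h(y)|=3$; by (ii) a cut edge is interior, shared by two squares, and it is the cut edge of both. Hence the cut edges pair the squares of $R$ into dominoes, defining a tiling $T_R$; the edges of $T_R$ are precisely the non-cut edges, on which $|h(x)-h(y)|=1$ with the sign prescribed in the definition of $h_{T_R}$ (it is already forced modulo $4$), and after normalizing the additive constant so that $h(0,0)=0$ one obtains $h=h_{T_R}\in\mathscr{H}_R$.

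I expect the main obstacle to be the step in the converse where (i), (ii) and simple connectivity are combined to show that $\{c_C\}$ is forced onto the standard pattern on the four parity classes: one has to check that walking around $\partial R$ genuinely yields enough of the relations $c_C - c_{C'}\equiv \pm1$ to pin down the whole $4$-cycle (handling thin or otherwise degenerate regions with care), and that condition (iii) is exactly tight enough to make the $\pm3$-edges a perfect matching on squares. Everything else should be routine bookkeeping with the chessboard coloring. An alternative would be an induction on $|R|$ --- remove an extreme square together with the domino forced to cover it, and check that (i)--(iii) and simple connectivity persist on the smaller region --- but keeping the region simply connected after deletion looks more delicate than the matching argument above.
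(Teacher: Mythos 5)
The paper offers no proof of this lemma at all---it is quoted from Fournier~\cite{Fou}---so there is no internal argument to compare against. Your plan is the classical one, and most of it is sound: in the forward direction, boundary edges are never domino-interior, circling a domino gives the $\mp 3$ increment on its interior edge, and every increment is $\equiv\pm1\pmod 4$ with sign determined by the colouring, which yields (i)--(iii). In the converse, isolating in each unit square the unique edge of increment $\pm3$ and checking that these edges match the unit squares into dominoes is exactly the right mechanism, including the observation that such an edge is the distinguished edge of both squares sharing it.

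However, the step you flag as the main obstacle is a genuine gap, and it cannot be closed for the lemma as literally stated. The boundary $\partial R$ need not meet all four parity classes, so walking around it does not pin down all four constants $c_C$. Concretely, take $R=[0,2]^2$ with the paper's colouring. Condition (ii) forces the boundary values $h(0,0)=h(2,0)=h(0,2)=h(2,2)=0$, $h(1,0)=h(1,2)=-1$, $h(0,1)=h(2,1)=1$. The centre $(1,1)$ is the \emph{only} point of $R$ in its parity class, so (i) is vacuous for it, and (iii) only forces $h(1,1)\in[-2,2]$. Setting $h(1,1)=0$ satisfies (i), (ii) and (iii), yet corresponds to no tiling: the two tilings of $R$ give $h(1,1)=2$ and $h(1,1)=-2$ respectively. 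So the ``if'' direction, with (i) read literally, is false, and no amount of care with degenerate regions rescues your derivation that every edge-increment is odd. The statement that is actually true (and that the paper implicitly relies on later, when it argues that the functions $\alpha(x,\cdot)$ agree mod $4$ throughout $R$) replaces (i) by the stronger requirement that $h\bmod 4$ equal a \emph{specific} function of $(a\bmod 2,\,b\bmod 2)$ normalised by $h(0,0)=0$, equivalently $h\equiv\alpha\bigl((0,0),\cdot\bigr)\pmod 4$ on $R$. With that form of (i), every increment is $\equiv\pm1\pmod4$ with the sign dictated by the colouring, (iii) then confines it to the two admissible values, and the rest of your matching argument goes through verbatim. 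As written, your proposal proves neither the stated lemma (which is false) nor the corrected one (since you assume the pinning you cannot derive).
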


One can also consider a tiling of the entire plane $\Z^{2}$. Lemma~\ref{le:CorrespondinghFunc} remains valid for such tilings, with Condition~(ii) becoming redundant. In this case, the set of height functions $\mathscr{H}_{\Z^{2}}$ that are zero at a given point $x=(a,b)$ has a maximum element $\alpha (x,\cdot)$ (i.e., for every $p\in \Z^2$, no height function $h \in \mathscr{H}_{\Z^{2}}$ satisfies $h(p)> \alpha(x,p)$) that is defined as follows. For a point $y\in \Z^{2}$, we write $y=x+(i,j)$ and set $\delta (i,j)=  i-j \mod 2$. If $a-b=0 \mod 2$, then
\begin{equation*}
  \alpha (x,y)= \left\{
    \begin{split}
     2\| y-x \|_{\infty}+ \delta (i,j),  \qquad \text{ if } i \geq j, \\
     2\| y-x \|_{\infty}- \delta (i,j),  \qquad \text{ if } i < j.    \end{split}
  \right.
\end{equation*}
If $a-b=1 \mod 2$, then
\begin{equation*}
  \alpha (x,y)=\left\{
    \begin{split}
     2\| y-x \|_{\infty}- \delta (i,j), \qquad  \text{ if } i \geq j, \\
    2\| y-x \|_{\infty}+ \delta (i,j), \qquad  \text{ if } i < j.    \end{split}
  \right.
\end{equation*}
It is easy to see that $\alpha (x,\cdot)$ satisfies conditions~(i) and~(iii) of Lemma~\ref{le:CorrespondinghFunc}, and is thus a height function that corresponds to a tiling of $\Z^2$.
Examples of maximal tilings that correspond to $\alpha (x,\cdot)$ are depicted in Figure~\ref{fi:Maximal} (left and center).

\begin{figure}[h]
\centering
\includegraphics[scale=0.57]{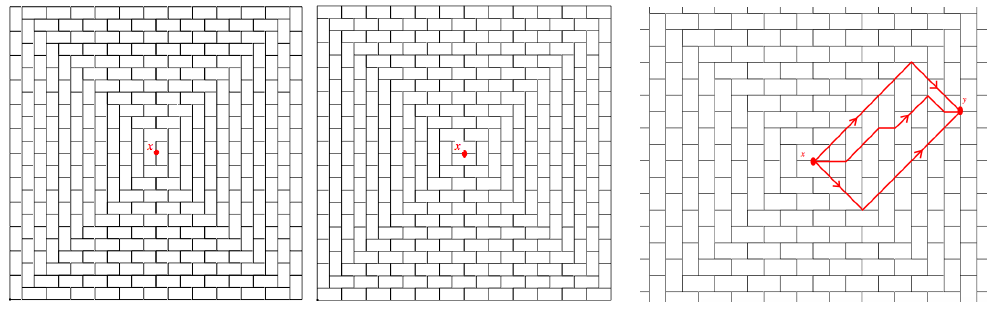}
\caption{Left and center: Maximal tilings that are centered at $x$ and correspond to the height function $ \alpha (x,\cdot)$. Right: Geodesic paths between $x$ and~$y$.}
\label{fi:Maximal}
\end{figure}

We say that a sequence of points $(x_{1},\ldots, x_{n})$ is a \emph{geodesic path} if
\begin{itemize}
\item  For every $i < n$, we have $\| x_{i+1} - x_1 \|_{\infty} = \| x_{i} - x_1 \|_{\infty}$+1.
\item  For every $i < n$, the points  $x_{i}$ and $x_{i+1}$ are corners of a common $1\times 1$ square in $\Z^{2}$.
 \end{itemize}
The right part of Figure \ref{fi:Maximal} depicts several geodesic paths between the same pair of points.

One key observation in our proofs is that $\alpha (x,\cdot)$ is strictly increasing on any geodesic path $(x_{1},\ldots,x_{n})$.
By the conditions of Lemma \ref{le:CorrespondinghFunc}, for any $1 \le i < n$ and height function $h$ that is defined on $x_i$ and $x_{i+1}$, there are exactly two possible values for $h(x_{i+1})-h(x_i)$.
One of these two values is negative, and the other is $\alpha (x_{1},x_{i+1}) -\alpha (x_{1},x_{i})$. That is, for any height function $h$ we have $h(x_{i+1})-h(x_i) \le \alpha (x_{1},x_{i+1}) -\alpha (x_{1},x_{i})$.

Another useful observation is that for every geodesic path $(x_{1},\ldots,x_{n})$ and $1 < i < n$, we have that $(x_{1},\ldots, x_{i})$ and $(x_{i},\ldots, x_{n})$ are also geodesic paths. By combining this with the previous observation, we notice that $\alpha(x,\cdot)$ is additive on geodesic paths. That is, if $(x_{1},\ldots,x_{n})$ is a geodesic path and $1 < i < n$, then $\alpha (x_{1},x_{i}) +\alpha (x_{i},x_{n})= \alpha (x_{1},x_{n})$.

 For a simply connected region $R$ and two points $x,y \in R$, we write $x \sim_{R} y$ if there exists a geodesic path between $x$ and~$y$ that is fully in~$R$.
The following lemma gives a necessary and sufficient condition for the tileability of a simply connected region~$R$.
This condition depends only on the height differences between pairs of points on the boundary $\partial R$.

\begin{lem} \label{l:necessary}
Let $R$ be a simply connected region of $\Z^{2}$ that contains the origin and let $h:\partial R \to \Z$ be a valid height function of $\partial R$.  The region $R$ is tileable if and only if for every pair $x,y \in \partial R$ that satisfies $x \sim_{R} y$, we have
\[
h(y) - h(x) \leq \alpha (x,y).
\]
\end{lem}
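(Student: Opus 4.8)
The plan is to prove the two directions of the equivalence separately, with the necessity direction being straightforward from the observations already recorded in the text, and the sufficiency direction being the substantial part.

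For necessity, suppose $R$ is tileable and let $H$ be a height function on all of $R$ that restricts to $h$ on $\partial R$ (its existence follows from Lemma~\ref{le:CorrespondinghFunc} applied to any tiling, after checking the boundary values agree with $h$, which they must since $h$ is the unique valid height function on $\partial R$). Given $x,y \in \partial R$ with $x \sim_R y$, pick a geodesic path $(x = x_1, \ldots, x_n = y)$ lying in $R$. By the observation that $\alpha(x_1, \cdot)$ is additive along geodesic paths and dominates the increments of any height function, we get $H(y) - H(x) \le \alpha(x, y)$; applying the same to the reversed path (also a geodesic) gives $H(x) - H(y) \le \alpha(y, x)$, i.e. $-\alpha(y,x) \le h(y) - h(x) \le \alpha(x,y)$.

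For sufficiency, the natural approach is to define a candidate height function $H$ on all of $R$ and verify it satisfies conditions (i)--(iii) of Lemma~\ref{le:CorrespondinghFunc}. The candidate is
\[
H(z) \ := \ \min_{x \in \partial R,\ x \sim_R z} \bigl( h(x) + \alpha(x, z) \bigr),
\]
the largest height function consistent with the boundary data, restricted to the "visibility" relation $\sim_R$. First I would check $H$ is well-defined and agrees with $h$ on $\partial R$: the inequality $H(x_0) \le h(x_0)$ is immediate by taking $x = x_0$; the reverse $H(x_0) \ge h(x_0)$ for $x_0 \in \partial R$ is exactly the hypothesis $h(x_0) - h(x) \le \alpha(x, x_0)$ — wait, we need $-\alpha(x_0,x) \le h(x_0)-h(x)$, equivalently $h(x) + \alpha(x_0, x) \ge h(x_0)$; but $\alpha$ is not symmetric, so here one must use additivity/triangle-type inequalities for $\alpha$ to pass between $\alpha(x,x_0)$ and $\alpha(x_0,x)$ along a common geodesic, or argue directly that $h(x) + \alpha(x,z) \ge h(x_0)$ whenever both $x \sim_R z$ and $x_0 \sim_R z$ by concatenating geodesics through $z$. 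Then conditions (i) and (iii) for $H$ should follow from the corresponding properties of each $\alpha(x, \cdot)$ together with the fact that a pointwise minimum of functions each satisfying (iii) and sharing the mod-4 structure (i) still satisfies them — this needs the parity bookkeeping to be handled carefully, since (i) links the value of $H$ at $z$ to its value at nearby same-color points, and one checks that the minimizing boundary point can be taken compatibly.

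The main obstacle will be condition (iii) — the Lipschitz bound $|H(z) - H(z')| \le 3$ for adjacent $z, z'$ — together with making sure $H$ is actually consistent (that the min is achieved and the mod-4 condition (i) holds globally, not just locally). The difficulty is that $\sim_R$ is only a relation, not transitive in an obvious way, and two adjacent points $z, z'$ may have their minima realized by different boundary points $x, x'$ whose geodesics to $z$ and $z'$ interact in a complicated way inside the possibly non-convex region $R$. The key geometric input to push through is that $z$ and $z'$ being corners of a common unit square means any geodesic from $x$ to $z$ can be extended (or a geodesic from $x$ to $z'$ found) staying in $R$ — this is where simple connectivity of $R$ and the structure of geodesic paths (the fact that initial and terminal segments of geodesics are geodesics, and that geodesics can be rerouted locally) must be invoked. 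I would prove a lemma that for adjacent $z \sim_R z'$, whenever $x \sim_R z$ there is a boundary point $x'$ with $x' \sim_R z'$ and $h(x') + \alpha(x', z') \le h(x) + \alpha(x, z) + 3$, and symmetrically; combined with the analogous lower bound this yields (iii). Once $H$ is shown to satisfy (i)--(iii), Lemma~\ref{le:CorrespondinghFunc} gives a tiling of $R$, completing the proof.
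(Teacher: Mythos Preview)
Your plan is the paper's plan: define the same candidate
\[
H(z)\;=\;\min_{x\in\partial R,\ x\sim_R z}\bigl(h(x)+\alpha(x,z)\bigr)
\]
and verify conditions (i)--(iii) of Lemma~\ref{le:CorrespondinghFunc}. Two places deserve sharpening.

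\textbf{Boundary agreement.} Your first instinct is correct and the ``wait'' is a wrong turn. To get $H(x_0)\ge h(x_0)$ you need $h(x)+\alpha(x,x_0)\ge h(x_0)$ for every $x\in\partial R$ with $x\sim_R x_0$, i.e.\ $h(x_0)-h(x)\le\alpha(x,x_0)$. That is literally the right-hand inequality of the hypothesis with $y=x_0$; no passage through $\alpha(x_0,x)$ or geodesic concatenation is needed. (Your ``equivalently'' after the ``wait'' is also an algebra slip: $-\alpha(x_0,x)\le h(x_0)-h(x)$ rearranges to $h(x)\le h(x_0)+\alpha(x_0,x)$, not to $h(x)+\alpha(x_0,x)\ge h(x_0)$.)

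\textbf{Condition (iii).} You correctly flag this as the crux and state the right auxiliary claim, but the phrase ``this is where simple connectivity \ldots\ must be invoked'' is exactly where the content is missing. The paper's argument is short and concrete, and does not use simple connectivity at this step. For adjacent $u,u'$ with $u$ interior, take $z\in\partial R$ realizing $H(u)=h(z)+\alpha(z,u)$ and a geodesic $P=(x_1,\ldots,x_n)$ from $z$ to $u$ inside $R$. Let $x_i$ be the \emph{last} point of $P$ on $\partial R$; then $(x_i,\ldots,x_n)$ is a geodesic whose interior avoids $\partial R$, and by maximality of the increase rate $H(u)=h(x_i)+\alpha(x_i,u)$ as well. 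Since this tail lies in the interior and $u'$ is adjacent to the interior point $u$, there is a geodesic from $x_i$ to $u'$ in $R$ at most one step longer than the tail, hence $H(u')\le h(x_i)+\alpha(x_i,u')\le H(u)+3$. The point $x_i$ is precisely your auxiliary boundary point $x'$; the trick is passing to the last boundary point on the minimizing geodesic, which makes the local rerouting near $u$ automatic.
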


\begin{proof}
We first prove that the condition is necessary. Assume that $R$ is tileable and extend the domain of $h$ to all of $R$ according to a specific tiling of $R$. Let $(x_{1},...,x_{n})$ be a geodesic path between two points $x,y\in \partial R$ (that is, $x_1=x$ and $x_n=y$). Recall that for every $1\le i <n$ we have $h (x_{i+1}) -  h (x_{i}) \le \alpha (x,x_{i+1}) - \alpha (x,x_{i})$, which implies
\[
h(y)-h(x)  =  \sum_{i=1}^{n-1} \big(h (x_{i+1}) -  h (x_{i})\big)  \leq  \sum_{i=1}^{n-1} \big(\alpha (x,x_{i+1}) - \alpha (x,x_{i})\big)  = \alpha (x,y) - \alpha (x,x).
\]
Since $ \alpha (x,x)=0$, we get that $h(y)-h(x) \leq \alpha (x,y)$ which completes this part of the proof.

We next prove that the condition of the lemma is sufficient. For that, we show that the function
\begin{equation} \label{eq:hmax}
h_{\max}(y) \, = \, \min_{x \in \partial R \,,\, x\sim_{R} y} \. \bigl[h(x)+ \alpha(x,y)\bigr]
\end{equation}
satisfies the three conditions of Lemma~\ref{le:CorrespondinghFunc} (with respect to $R$).
For Condition~(ii), it suffices to show that for every $y\in \partial R$ we have $h_{\max}(y) = h(y)$ (since $h$ satisfies this condition by definition). Consider such a point $y\in \partial R$, then $h(y)+\alpha(y,y) = h(y)$  and the assumptions of the theorem implies that  for all $x \in \partial R$ such that $ x\sim_{R} y$ the inequality $h(x)+ \alpha(x,y) \geq h(y)$ holds. Hence $h_{\max}(y) = h(y)$ on $\partial R$, and Condition~(ii) is satisfied by $h_{\max}$.

For every $x\in \partial R$, the function $h+\alpha (x,\cdot)$ satisfies Condition~(i) since both $h$ and $\alpha (x,\cdot)$ are height functions. Since $h_{\max}(y) = h(y)$ for every $y\in \partial R$, the function $h_{\max}$ satisfies Condition~(i) on $\partial R$. This ``forces'' the various functions $\alpha (x,\cdot)$ to be identical $\mod 4$ on $\partial R$, and thus all over $R$. That is, for any $y\in \partial R$ the expression $h(x)+\alpha(x,y) \mod 4$ does not depend on the choice of $x\in \partial R$. This in turn implies that $h_{\max}$ satisfies Condition~(i).

It remains to prove that $h_{\max}$ satisfies Condition~(iii); that is, to show that for every pair of adjacent points $x,y\in R$, we have $|h_{\max}(x)-h_{\max}(y)| \leq 3$. If both $x$ and $y$ are in $\partial R$, this is immediate from Condition~(ii). Thus, without loss of generality, we assume that $x$ is in the interior of $R$.
Let $z \in \partial R$ satisfy $h_{\max}(x) = h(z)+\alpha(z,x)$ and let $P=(x_1,\ldots,x_n)$ be a geodesic path from $z$ to $x$ that is contained in $R$. Let $x_i$ be the last vertex in $P$ that is on $\partial R$, and notice that $P'=(x_i,\ldots,x_n)$ is a geodesic path from $x_i$ to $x$. Since $h_{\max}(x)$ has the maximum increase rate that any height function may have, we obtain that $h_{\max}(x) = h(x_i)+\alpha(x_i,x)$. Since $P'$ does not contain edges of $\partial R$, there must also exist a geodesic path from $x_i$ to $y$. Since $x$ and $y$ are neighbors, this implies $h_{\max}(y) \leq  h(x_i)+\alpha(x_i,y) \leq  h(x_i)+\alpha(x_i,x)+3$ and $h_{\max}(y)-h_{\max}(x) \le 3$. A symmetric argument yields $h_{\max}(x)-h_{\max}(y) \le 3$, and completes the proof of Condition~(iii).
\end{proof}

\begin{figure}[h]
\centering
\includegraphics[scale=0.68]{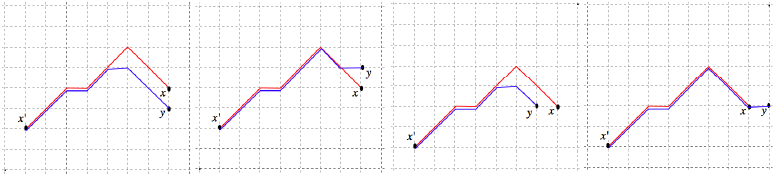}
\caption{If $y$ is adjacent to the interior point $x$, then a geodesic path from $x'$ to $x$ implies a geodesic path of a similar length from $x'$ to $y$. }
\label{fi:paths}
\end{figure}

For points $x,y\in \partial R$, we denote by $G(x,y)$ the set of points in $\Z^2$ that are in at least one geodesic path between $x$ and~$y$.
Notice that $G(x,y)$ is a rectangle with edges of slopes~{$\pm 1$}, possibly with two opposite
corners truncated; for example, see the right part of Figure \ref{fi:Maximal}.
Let $S$ be a set that contains $\partial R$ and any number of points from the interior of~$R$.
We write $x\approx_S y$ if $x,y \in S$ and $G(x,y)\setminus\{x,y\}$ is disjoint from~$S$.
The following theorem is a refinement of Lemma~\ref{l:necessary}, which reduces the number of point pairs that determine whether a region is tileable.

\begin{thm}
\label{tilability}
Let $R$ be a simply connected region of $\Z^{2}$ that contains the origin, let $h:\partial R \to \Z$ be a valid height function, and let $S \subset R $ be a subset that contains $\partial R$.  The region $R$ is tileable if and only if there exists a function $g:S \to \Z$ such that $g=h$ on $\partial R$ and for every pair $x,y \in S$ that satisfies $x\approx_{S} y$, we have
\begin{equation} \label{eq:condition}
-\alpha (y,x) \leq g(y) - g(x) \leq \alpha (x,y).
\end{equation}
\end{thm}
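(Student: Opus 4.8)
The plan is to prove the two implications separately, each time passing through Lemma~\ref{l:necessary}.

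\smallskip
\emph{Necessity.} Assume $R$ is tileable and fix a tiling; let $H:R\to\Z$ be its height function. Since $h$ is the unique valid height function of $\partial R$ (it is forced edge by edge by condition~(ii)), we have $H|_{\partial R}=h$, so we may take $g:=H|_S$, which equals $h$ on $\partial R$. It remains to check \eqref{eq:condition} for each pair $x\approx_S y$. As $\partial R\subseteq S$, the hypothesis $x\approx_S y$ forces $G(x,y)\cap\partial R\subseteq\{x,y\}$. If in addition $G(x,y)\subseteq R$, then some geodesic path from $x$ to $y$ lies in $R$, and exactly as in the necessity part of the proof of Lemma~\ref{l:necessary}, summing $H(x_{i+1})-H(x_i)\le\alpha(x,x_{i+1})-\alpha(x,x_i)$ along it gives $g(y)-g(x)\le\alpha(x,y)$, with the symmetric estimate for the lower bound. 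The delicate point is that $G(x,y)\subseteq R$ may fail even when $x\approx_S y$ — this occurs, for instance, when $R$ is C-shaped and $x,y$ sit at the two tips of the notch, so that every geodesic path between them leaves $R$. In that situation $G(x,y)\cap R=\{x,y\}$, and one argues as follows: pick a geodesic path $\pi$ from $x$ to $y$ (lying outside $R$ except at its endpoints) and extend $H$ along $\pi$ by always assigning the larger of the two admissible height values, which forces the partial extension to equal $h(x)+\alpha(x,\cdot)$ on $\pi$. Each vertex of $\pi$ one reaches has, besides the previous vertex of $\pi$, only neighbours on $\partial R$ (an interior vertex of $R$ has all neighbours in $R$), and the compatibility needed at such a neighbour is an instance of the $\alpha$-bound of Lemma~\ref{l:necessary} together with additivity of $\alpha$ along geodesic paths. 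When the extension reaches $y$, the value $h(y)$ already carried by $H$ must coincide with one of the two admissible values there, both of which are $\le h(x)+\alpha(x,y)$; hence $g(y)-g(x)=h(y)-h(x)\le\alpha(x,y)$, and symmetrically $g(x)-g(y)\le\alpha(y,x)$.

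\smallskip
\emph{Sufficiency.} Assume a function $g:S\to\Z$ as in the statement exists. We verify the hypothesis of Lemma~\ref{l:necessary} for $h$ and conclude. Let $x,y\in\partial R$ with $x\sim_R y$. We claim there is a chain $x=z_0,z_1,\dots,z_k=y$ with every $z_i\in S\cap G(x,y)$, every consecutive pair satisfying $z_i\approx_S z_{i+1}$, and $\sum_{i=0}^{k-1}\alpha(z_i,z_{i+1})=\alpha(x,y)$. We build it recursively on $|S\cap G(x,y)|$: if $x\approx_S y$ take the trivial chain; otherwise choose $w\in(G(x,y)\setminus\{x,y\})\cap S$ and splice the chains produced for $(x,w)$ and $(w,y)$. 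This is legitimate because $w$ lies on a geodesic path from $x$ to $y$, which (since any geodesic path from $x$ to $w$ extends through $w$ to a geodesic path from $x$ to $y$, and likewise with $x$ and $y$ interchanged) gives $G(x,w)\cup G(w,y)\subseteq G(x,y)$ and, by additivity of $\alpha$ on geodesic paths, $\alpha(x,w)+\alpha(w,y)=\alpha(x,y)$; moreover $y\notin G(x,w)$ and $x\notin G(w,y)$ because every point $z\in G(x,w)$ satisfies $\|z-x\|_{\infty}\le\|w-x\|_{\infty}<\|y-x\|_{\infty}$, so $|S\cap G(x,w)|$ and $|S\cap G(w,y)|$ are both strictly smaller than $|S\cap G(x,y)|$ and the recursion terminates. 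Given such a chain, applying \eqref{eq:condition} to each $z_i\approx_S z_{i+1}$ and using $g|_{\partial R}=h$ yields
\[
h(y)-h(x)=\sum_{i=0}^{k-1}\bigl(g(z_{i+1})-g(z_i)\bigr)\le\sum_{i=0}^{k-1}\alpha(z_i,z_{i+1})=\alpha(x,y),
\]
and symmetrically $h(x)-h(y)\le\alpha(y,x)$. Thus $h$ satisfies the condition of Lemma~\ref{l:necessary}, so $R$ is tileable.

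\smallskip
\emph{Where the difficulty lies.} The sufficiency direction is essentially a telescoping argument once additivity of $\alpha$ on geodesic paths is in hand, the only care being termination of the recursion, handled above. The real obstacle is the case $G(x,y)\not\subseteq R$ in the necessity direction: one must show that the value of $h(y)-h(x)$ forced by tileability still respects the $\alpha$-bound even though no geodesic path between $x$ and $y$ stays in $R$. Making the extension of $H$ along the exterior geodesic path $\pi$ rigorous — in particular checking, vertex by vertex, that the maximal admissible value is consistent with every already-assigned neighbour — is the heart of the proof, and is where simple-connectedness of $R$ and the boundary $\alpha$-bounds of Lemma~\ref{l:necessary} are genuinely used.
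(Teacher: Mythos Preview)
Your sufficiency argument is correct and is essentially the paper's: both reduce to Lemma~\ref{l:necessary} by splitting the pair $(x,y)$ at an intermediate point $w\in S\cap G(x,y)$ and telescoping, using additivity of $\alpha$ on geodesic paths. Your recursion on $|S\cap G(x,y)|$ is a minor variant of the paper's induction on $\|x-y\|_\infty$, and is arguably cleaner because it does not need to carry the hypothesis $x\sim_R y$ through the induction.

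For necessity you are right that the case $G(x,y)\not\subseteq R$ is delicate; the paper simply asserts that a geodesic in $R$ exists whenever $x\approx_S y$, and this can fail (e.g.\ take a thin L-shaped $R$ with $x=(0,0)$ and $y=(2,2)$ at two reflex corners and $(1,1)\notin R$, so $G(x,y)=\{x,(1,1),y\}$). However, your proposed fix does not work. Extending $H$ along the exterior geodesic $\pi$ by ``always assigning the larger admissible value'' produces the function $h(x)+\alpha(x,\cdot)$ on~$\pi$, but nothing forces this to be compatible with $H$ on $R$: at a boundary neighbour $w\in\partial R$ of some interior vertex $x_i\in\pi$ you would need both $h(w)\le h(x)+\alpha(x,x_i)+3$ and $h(w)\ge h(x)+\alpha(x,x_i)-3$, and Lemma~\ref{l:necessary} gives at most the first inequality, and only when $x\sim_R w$, which is not guaranteed. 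Consequently your final claim---that $h(y)$ ``must coincide with one of the two admissible values'' relative to the artificial extension value at $x_{n-1}$---has no justification: $h(y)$ is determined by the boundary walk inside $R$, while the extension value at $x_{n-1}$ is unrelated to any tiling. A correct treatment of this case amounts to showing that any height function on a simply connected $R$ extends to a height function on $\Z^2$ (whence $H(y)-H(x)\le\alpha(x,y)$ for all $x,y\in R$), which is true but requires a genuine argument beyond what you have sketched.
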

\begin{proof}
We first prove that the condition is necessary. Assume that $R$ has a tiling $T$ and let $g:R \to \Z$ be corresponding height function. By definition, $g=h$ on $\partial R$ and $g$ satisfies \eqref{eq:condition} for every $x,y\in \partial R$ with $x\sim_R y$. For every pair $x,y \in  \partial S$ for which $x\approx_{S} y$ there exists a geodesic path $(x_{1},\ldots, x_{n})$ in $R$ with $x=x_{1}$ and $y=x_{n}$. As in the proof of Lemma \ref{l:necessary}, we have
\[
g(y)-g(x)  =  \sum_{i=1}^{n-1} \big(g (x_{i+1}) -  g (x_{i})\big)  \leq  \sum_{i=1}^{n-1} \big(\alpha (x,x_{i+1}) - \alpha (x,x_{i})\big)  = \alpha (x,y).
\]
A symmetric argument implies $g(x)-g(y)\le \alpha (y,x)$, which completes the proof of this part.

To prove that the condition of the theorem is sufficient, we show that it implies the condition of Lemma~\ref{l:necessary}.
That is, if a function $g$ satisfies \eqref{eq:condition} for every pair $x,y \in  \partial S$ with $x\approx_{S} y$, then the same condition  is also satisfied for every pair $x,y \in  \partial R$ with $x \sim_{R} y$.

Consider a pair $x,y \in  S$ such that $x \sim_{R} y$.
We prove that \eqref{eq:condition} holds for $x,y$ by induction on $\|x-y \|_{\infty}$.
Since $\partial R \subset S$, this would complete the proof of the theorem.
For the induction basis, consider the case where $\|x-y \|_{\infty}=1$.
In this case we have $x\approx_S y$, so \eqref{eq:condition} is satisfied for $x,y$ by the definition of $g$.

For the induction step, consider the case where $\|x-y \|_{\infty}=k>1$.
In this case, either $x\approx_{S} y$  and \eqref{eq:condition} is satisfied by the definition of $g$, or there exists a geodesic path $P=(x_1,\ldots,x_n)$ between $x$ and~$y$ that is in $R$ and intersects $S\setminus\{x,y\}$. In the latter case, let $x_i$ be a vertex of $P$ that is in $S\setminus\{x,y\}$. Then $(x_1,\ldots,x_i)$ is a geodesic path between $x$ and $x_i$ and $(x_i,\ldots,x_n)$ is a geodesic path between $x_i$ and~$x_n$. By the induction hypothesis, we have
$$
\begin{aligned}
\alpha (x_{i},x) &\leq g(x_i) - g(x) \leq \alpha (x,x_i), \\
\alpha (y,x_{i}) &\leq g(y) - g(x_i) \leq \alpha (x_i,y).
\end{aligned}
$$
By combining these two inequalities we get $g(y) - g(x) \leq \alpha (x,x_i) +\alpha (x_i,y)$. Since $P$ is obtained by combining a geodesic path from $x$ to $x_i$ together with a geodesic path from $x_i$ to $y$, we have $\alpha (x,x_i) +\alpha (x_i,y) = \alpha(x,y)$, which in turn implies $g(y) - g(x) \le \alpha(x,y)$. A symmetric argument implies $g(x) - g(y)\le \alpha(y,x)$, which completes the induction step and the proof of the theorem.
\end{proof}

\section{Algorithm for tileability}\label{s:algorithm}

\subsection{Outline}
In this section we prove theorems~\ref{t:main} and~\ref{t:main-inter}.
First, we present an algorithm for checking whether a simply connected region $R$ is tileable.
The algorithm is based on partitioning $R$ into interior-disjoint squares of various sizes.
These squares have their vertices in $\Z^2$, but are ``rotated by $45^\circ$'' in the sense that the slopes of their edges are $\pm 1$.
To cover $R$ with such squares, along the boundary of $R$ we use right-angled triangles with two edges of length 1, instead of squares; for example, see Figure \ref{fi:Subdivision}.
We consider the set $S$ that consists of $\partial R$ together with the vertices of the rotated squares.
By Theorem \ref{tilability}, to check whether $R$ is tileable it suffices to compare between pairs $x,y \in S$ that satisfy $x\approx_{S} y$.
We will prove that each point of $S$ participates in at most eight such pairs, which would in turn imply that total number of pairs that satisfy $x\approx_{S} y$ is at most linear in the perimeter of~$R$.

\begin{figure}[h]
\centering
\includegraphics[scale=0.52]{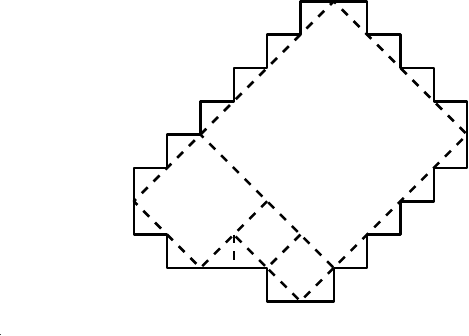}
\caption{A subdivision of the area that is bounded by the solid edges into rotated squares and right-angled triangles with two edges of length 1.}
\label{fi:Subdivision}
\end{figure}

\subsection{Partitioning the region}
We begin with the following technical result.

\begin{thm}
\label{th:Subdivision}
Let $R$ be a simply connected region with $|\partial R|=p$. Then there exists a subdivision $S$ of $R$ into $O(p)$ interior-disjoint rotated squares and right-angled triangles with two edges of length 1. Such a subdivision can be found in $O(p\log p)$ time.
\end{thm}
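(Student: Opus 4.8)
The plan is to produce the subdivision by a recursive, quadtree-style procedure adapted to the $45^\circ$-rotated geometry. First I would fix a large axis-aligned bounding box (in the rotated coordinate system, i.e., a box with edges of slope $\pm 1$) of side length $2^k$ containing all of $R$, where $2^k = \Theta(p)$; this is possible since the diameter of a simply connected region is $O(p)$. Starting from this box, I recursively subdivide: given a current rotated square $Q$, if $Q$ lies entirely inside $R$ we keep it as a piece of the subdivision; if $Q$ is disjoint from the interior of $R$ we discard it; otherwise $Q$ straddles $\partial R$, and we split $Q$ into four congruent rotated sub-squares of half the side length and recurse on each. The recursion on a square stops also when it reaches side length $1$ (the smallest scale), at which point any leftover pieces near $\partial R$ are cut by the boundary edges into the allowed right-angled triangles with two unit legs (the boundary consists of unit horizontal and vertical edges, which are exactly the diagonals of unit rotated squares, so a unit rotated square meeting $\partial R$ decomposes into at most two such triangles).

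The key claim is that this produces only $O(p)$ pieces. The standard charging argument: a square $Q$ at scale $2^j$ is subdivided only if $\partial R$ passes through it, so the number of subdivided squares at scale $2^j$ is $O(p/2^j + 1)$ — because $\partial R$ has total length $p$ and the rotated squares at a fixed scale tile the plane, so a length-$p$ curve can meet only $O(p/2^j)$ of them, plus lower-order terms from the bounded recursion depth $O(\log p)$. Summing $\sum_{j=0}^{k} O(p/2^j + 1) = O(p) + O(\log p) = O(p)$ bounds the number of subdivided squares; each subdivided square creates at most four children, and each undivided square or boundary triangle has a parent that was subdivided, so the total number of pieces is $O(p)$ as well. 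The final boundary-triangle step adds at most a constant number of triangles per unit boundary edge, contributing a further $O(p)$.

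For the running time, I would maintain the boundary $\partial R$ as the given cyclic sequence of directed unit edges, and at each recursive call test whether the current square is inside, outside, or straddling $R$. The subtlety is doing these tests without touching the whole region each time. The main obstacle — and the step I expect to require the most care — is implementing the straddle test and distributing the boundary edges among the four children in amortized $O(\log p)$ time per edge: I would preprocess the boundary into a balanced search structure (e.g., sorted by the rotated coordinates) so that, for a square $Q$, one can in $O(\log p + (\text{edges in }Q))$ time enumerate the boundary edges meeting $Q$, and detect "fully inside"/"fully outside" via a point-location / winding-number query at a corner of $Q$ together with the emptiness of that enumeration. Since each boundary edge is passed down along a root-to-leaf path of length $O(\log p)$ and participates in $O(1)$ work at each level, the total work on boundary edges is $O(p \log p)$; the squares that are entirely inside or outside are detected in $O(\log p)$ each and there are $O(p)$ of them, contributing another $O(p\log p)$. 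Hence the subdivision is found in $O(p \log p)$ time, completing the proof.
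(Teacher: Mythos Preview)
Your proposal is correct and follows essentially the same quadtree-style approach as the paper: start from a large rotated bounding square, recursively subdivide the squares that straddle $\partial R$, and bound the total number of pieces via the length-versus-scale argument that a curve of length $p$ meets $O(p/2^j)$ tiles at scale $2^j$. The paper implements this level-by-level rather than recursively---traversing the entire boundary once per level and propagating inside/outside status through adjacency and a ``waiting list'' mechanism (with the companion lemma $|S_i|<9\cdot 2^{i-1}$)---whereas you use a recursive version with a preprocessed search structure and point-location queries; both realizations give the same $O(p)$ piece count and $O(p\log p)$ running time.
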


\begin{proof}
We prove the theorem by presenting an algorithm that receives a simply connected region $R$ with perimeter $p$ and constructs a subdivision $S$ of $R$ in time $O(p \log p)$. The subdivision $S$ consists of $O(p)$ rotated squares and right-angled triangles with two edges of length 1. All of the squares in this proof are rotated by $45^\circ$.

We  begin the algorithm by initializing several variables. Set $Q_\text{in}=Q_\text{out}=\emptyset$ where $Q_\text{in}$ (respectively, $Q_\text{out}$) is a set in which we place squares that are fully on the inside of $R$ (resp., fully on the outside of $R$).
Let $n$ be the smallest power of $2$ that is larger or equal to $2p$, consider an $\sqrt{2} n\times \sqrt{2}n$ square that fully contains $R$, and let $S_0$ be a set that contains only this square.

We repeat the following process for $t=\log_2n$ iterations:
\begin{itemize}[leftmargin=0.135in]
\item At the beginning of iteration $i$ we consider the set $S_{i-1}$, which contains interior-disjoint squares of size $\sqrt{2}n/2^{i-1} \times \sqrt{2}n/2^{i-1}$ that were obtained in the preceding iteration. We partition each of these squares into four interior-disjoint squares of size $\sqrt{2}n/2^{i} \times \sqrt{2}n/2^{i}$. Denote the set of these $4|S_{i-1}|$ squares as $T_{i}$. For each square of $T_{i}$, we record the four squares of $Q_\text{in}\cup Q_\text{out} \cup T_i$ that share a boundary with it (some of these might not be squares but the area outside of the square of $S_0$). This can be done in constant time by using the information that was stored in the previous iteration for the squares of $T_{i-1}$.
\item We travel across $\partial R$. Every time that we enter a square $s\in T_i$, we mark the point from which we entered $s$, mark what side of this boundary point of $s$ is the interior of $R$, and insert $s$ into $S_i$.
\item For every square $s\in T_i\setminus S_i$, we check whether $s$ is in the interior or in the exterior of $R$ (see below for the full details of this process). If $s$ is in the interior, we add $s$ to $Q_\text{in}$. Otherwise, we add it to $Q_\text{out}$.
\end{itemize}

After $\log_2n$ iterations, we have a set $Q_\text{in}$ of interior-disjoint squares that are fully contained in $R$ and a set $S_{\log_2n}$ of $\sqrt{2}\times\sqrt{2}$ squares whose interior is intersected by $\partial R$. We split every square $s\in S_{\log_2n}$ into four right-angled triangles and insert into $S$ the triangles that are in the interior of $R$ (out of the four triangles, between one and three are in the interior). After also inserting $Q_\text{in}$ into $S$, the set $S$ is a subdivision of $R$ into interior-disjoint squares and right-angled triangles.

We now explain how, at the end of the $i$-th iteration, we go over each square $s\in T_i\setminus S_i$ and check whether $s$ is in the interior or in the exterior of $R$.
We go over the squares of $T_i\setminus S_i$ in an arbitrary order. When considering a square $s \in T_i\setminus S_i$, we  already know which squares of $Q_\text{in} \cup Q_\text{out} \cup T_i$ share a boundary with $s$. Notice that  there exists a unique square in each side, and that these four squares may be of different sizes.
\begin{itemize}[leftmargin=0.135in]
\item If one of the four surrounding squares is in $Q_\text{in}$, we add $s$ to $Q_\text{in}$.
\item Otherwise, if one of the four surrounding squares is in $Q_\text{out}$, we add $s$ to $Q_\text{out}$.
\item Otherwise, if one of the four neighboring squares $s'$ is in $S_i$, we travel along the boundary of $s'$ until we get to an intersection with the border of $R$ (we marked these intersection points when inserting $s'$ to $S_i$). For each such intersection point we previously marked which side is the interior of $R$, and we can use this information to determine whether $s$ is on the outside or on the inside of $R$ (and then place $s$ accordingly in $Q_\text{in}$ or in $Q_\text{out}$).
\item We remain with the case where the four neighboring squares are currently in $T_i\setminus ( S_i \cup Q_\text{in} \cup Q_\text{out})$. In this case, we arbitrarily choose one of these four neighbours $s'$ and add $s$ to the ``waiting list'' of $s'$ (see below for the purpose of this list).
\end{itemize}

If several squares of $T_i\setminus S_i$ form a connected component, then either all of these squares are in the interior of $R$ or all of these square are in the exterior of $R$.
Thus, each time that we decide whether a square $s\in T_i\setminus S_i$ goes into $Q_\text{in}$ or into $Q_\text{out}$, we inspect the waiting list of $s$ and place the squares that are in it in the same $Q$ (we then have to check the waiting lists of each of these squares, and so on).

\subsection*{The running time of the algorithm.} Notice that $2p\le n<4p$, so for any asymptotic bound that we derive with respect to $n$, we may replace $n$ with $p$. To show that the running time of the algorithm is $O(p\log p)$, we require following lemma. Recall that $S_i$ is the set of interior-disjoint squares of size $\sqrt{2}n/2^{i} \times \sqrt{2}n/2^{i}$ at step $i$  whose interior is intersected by $\partial R$.

\begin{lem} \label{le:SiCard}
$|S_i| \le 9\cdot 2^{i-1}$.
\end{lem}
\begin{proof}
Partition the square of $S_0$ into $4^{i}$ interior-disjoint squares of size $\sqrt{2}n/2^{i} \times \sqrt{2}n/2^{i}$, and denote the set of these squares as $S'_i$. Notice that $S_i \subset S'_i$. Specifically, $S_i$ consists of the squares of $S'_i$ that are intersected by $\partial R$. We traverse $\partial R$ starting from an arbitrary point $v_1$. During this process we will mark fewer than $9\cdot 2^{i-1}$ squares, so that the marked squares fully contain the boundary of $R$. This would immediately imply $|S_i| < 9\cdot 2^{i-1}$.

We first mark a square of $S'_i$ that contains $v_1$ (there are at most four such squares), and the eight squares that surround it (i.e., share a vertex with it). We then continue to travel across the boundary of $R$ until we get to a point $v_2$ that is not contained in any marked square. We mark a square of $S_i'$ that contains $v_2$ and the eight squares surrounding it (some of these squares are already marked, and remain so). We then continue to travel until we reach a point $v_3$ that is not in any marked square. We continue in the same manner until we return to $v_1$.

Notice that each time that we get to a point $v_i$ that is in no marked square, we mark fewer than nine unmarked squares. After marking these squares, we travel at least $2n/2^{i}$ steps along $\partial R$ before we reach $v_{i+1}$. Since $\partial R$ is of length $p\le n$, the total number of marked squares is smaller than $9\cdot 2^{i-1}$
\end{proof}

The algorithm consists of $t=\log_2n$ iterations. Let us show that each iteration has a running time of $O(n)$, which would complete the proof of the theorem.  Consider the running time of the $i$-th iteration. By lemma \ref{le:SiCard}, we start this iteration with a set $S_{i-1}$ of $O(2^{i})$ squares, and partition it into a set $T_i$ of $O(2^{i})$ squares. For each new square we also record the four squares of $Q_\text{in}\cup Q_\text{out} \cup T_i$ that are its direct neighbors. Since handling each square of $T_i$ requires constant time, this step takes $O(2^i) =O(n)$ time.

We then travel the boundary of $R$, and  every time that we cross to a different square of $T_i$ we perform a constant number of operations (marking the entry point and possibly inserting the square into $S_i$). By considering the origin to be the bottom left corner of the square of $S_0$, we can easily decide when we enter a new square of $T_i$. This occurs exactly when the $x$ or $y$ coordinate of our current position becomes $0 \mod (n/2^i)$. Thus, the entire traversal of the boundary of $R$ takes $O(n)$ time.

The last part of the $i$-th iteration involves going over each square $s\in T_i\setminus S_i$ and checking whether $s$ is fully in the interior or in the exterior of $R$. This check is based on the the four squares that surround $s$. The only case that takes more than a constant time occurs when none of these squares is in $Q_\text{in}$ and $Q_\text{out}$, while at least one is in $S_i$. In this case we travel along the boundary of such a neighboring square. The perimeter of such a square is $4n/2^i$, so each instance of this case takes $O(n/2^i)$ times. By lemma \ref{le:SiCard}, $|S_i|=O(2^i)$ and we consider each square of $S_i$ at most four times (at most once for each of its four direct neighbors). Thus, the combined time of all of these checks is $O(n)$.

The only issue that we did not consider so far is the time required to handle the waiting lists. Since each square of $T_i$ is in at most one such list, and since $|T_i|=O(2^{i})$, the total time for handling the waiting lists is $O(2^i) = O(n)$.  Finally, It is easy to see that the last step of the algorithm, of cutting the squares of $S_{t}$ into triangles, requires $O(n)$ time.  This completes the proof.

\subsection*{Bounding $|S|$.}
By Lemma \ref{le:SiCard}, in the $i$-th iteration the algorithm adds fewer than $|S_i| < 9\cdot 2^{i-1}$ squares to $Q_\text{in}$. Summing this quantity over the $\log_2(n)$ iterations of the algorithm yields $|S|=O(n) = O(p)$, as asserted. Some of these squares may be split into two triangles, but this does not affect the asymptotic size of $S$.
\end{proof}

% We are now ready to prove Theorem \ref{t:main}. To help the reader, we first repeat the statement of the theorem.
% \vspace{2mm}
%
% \noindent {\bf Theorem \ref{t:main}.} \ \ \emph{ Let $\Ga$ be a simply connected region in the plane~$\zz^2$, and
% let $p = |\pt \Ga|$ be the perimeter of $\Ga$.
% There exist an algorithm that decides tileability of~$\Ga$
% in time $O(p \ts\log\mts p)$.}

\begin{proof}[Proof of Theorem~\ref{t:main}]
We begin by running the algorithm of Theorem~\ref{th:Subdivision}, to obtain a subset $S\subset R$ of $O(p)$ interior-disjoint squares and triangles that cover $R$.
We build a graph $G=(V,E)$, where $V$ consists of the points of $\partial R$ and the vertices of the squares and triangles of $S$. An edge $(x,y)\in V^2$ is in $E$ if and only if $x\approx_S y$.
Notice that $|V|=O(p)$.

\subsection*{Bounded degrees.} We now prove that every vertex of $G$ is of degree at most eight. That is, that any point $x \in S$ satisfies $x\approx_S y$ for at most eight points $y\in S\setminus\{x\}$.

\begin{figure}[h]
\centering
\includegraphics[scale=0.5]{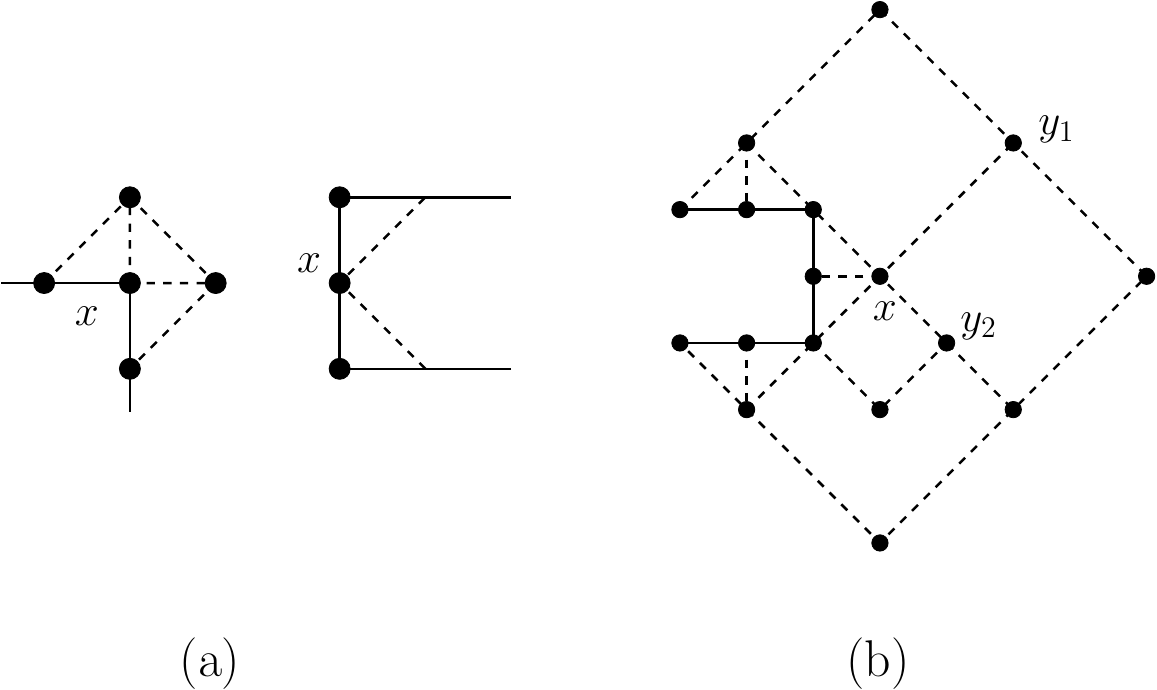}
\caption{(a) When $x\in \partial R$, it is blocked by triangles. (b) The point $x$ is the corner of three squares and one pair of triangles of $S$.}
\label{fi:DegreeTriangle}
\end{figure}

We first consider the case where $x\in \partial R$, which forces $x$ to be adjacent to triangles of $S$ (and to at most one square); for example, see Figure \ref{fi:DegreeTriangle}(a). We say that $x$ forms a \emph{valid pair} with $y\in V$ if $x\approx_S y$.
In the current case, $x$ forms a valid pair with vertices that are are in a common triangle with it. Moreover, $x$ forms a valid pair with a point $y\in V$ that does not share a triangle with it if and only if the straight-line segment between $x$ and~$y$ is fully in the interior of~$R$, does not contain any other points of $V$, and has a slope~{$\pm 1$}. Thus, in this case $x$ participates in at most seven valid pairs.

Next, consider the case where $x \notin \partial R$ and is at the corner of four squares and/or pairs of triangles of $S$; for example, see Figure \ref{fi:DegreeTriangle}(b). As before, $x$ creates a valid pair with each vertex of $V$ that shares a triangle with $x$. The maximum degree of eight is obtained when $x$ is surrounded by four pairs of triangles. If $x$ is a vertex of a square $s\in S$, denote the two edges of~$s$ that are adjacent to $x$ as $e_1$ and~$e_2$. Notice that $x$ creates a valid pair with the point of $V$ that is closest to it along $e_1$ and with the point of $V$ that is closest to it along~$v_2$. For example, in the case of the square to the right of $x$ in Figure \ref{fi:DegreeTriangle}(b), these vertices are $y_1$ and~$y_2$.

Beyond the valid pairs that are described in the previous paragraph, $x$ cannot create a valid pair with any other point of $V$. For example, in Figure \ref{fi:DegreeTriangle}(b) $x$ cannot form a valid pair with any additional vertex $z$ that is to its right, since there must be a geodesic path between $x$ and~$z$ that contains either $y_1$ or~$y_2$.

\begin{figure}[h]
\centering
\includegraphics[scale=0.45]{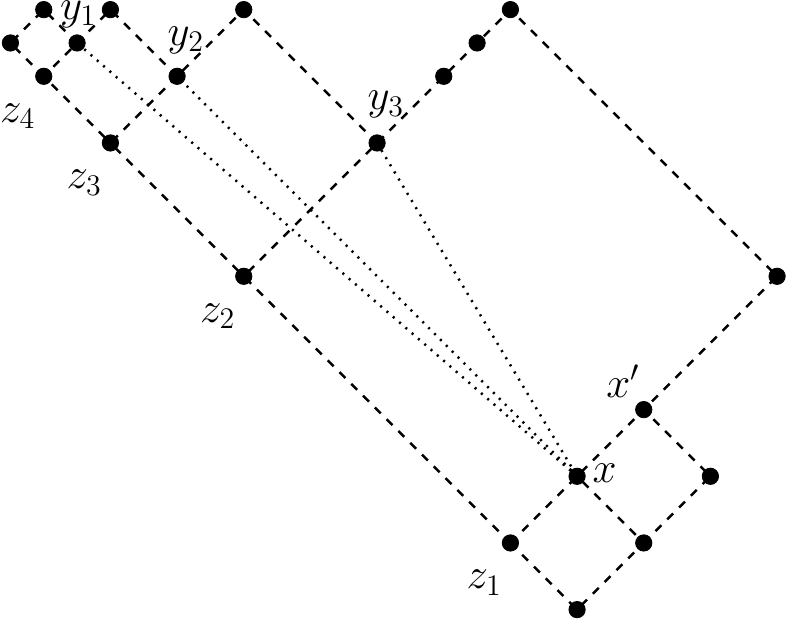}
\caption{The case where $x$ is on the boundary of a square $s\in S$ without being a vertex of $s$.}
\label{fi:DegreeSquare2}
\end{figure}

Finally, it is possible that $x \notin \partial R$ and $x$ is on the boundary of a square $s\in S$ without being a vertex of $s$; for example, see Figure \ref{fi:DegreeSquare2}. In the figure, $x$ does not form a valid pair with $z_2$, $z_3$, and $z_4$, due to geodesic paths that contain $z_1$. While $x\approx_S y_2$, we have $x\not\approx_S y_1$ due to a geodesic path that contains $y_2$. Similarly, $x\not\approx_S y_3$ due to $x'$. By the way in which we perform our subdivision of $R$ into squares, it is impossible to have a subdivision with $x$ but without $x'$ (that is, the square below $x$ cannot exist without the square to the right of $x$. The latter square may be further subdivided). Similarly, $x'$ forms a valid pair with $y_3$ but not with $y_2$, $y_1$, and the vertices to the right of $y_3$.

The examples in the previous paragraph illustrate a general principle: When $x$ is on the boundary of a square $s\in S$ without being a vertex of $s$, out of the points $y\in V$ for which the segment $xy$ intersects the interior of $s$, at most one point creates a valid pair with $x$. Specifically, such a point $y$ creates a valid pair with $x$ if and only if the segment $xy$ has slope $\pm 1$, does not contain any other point of $S$, and is fully in $R$. 
For any other such point $z\in V$, there must be a geodesic path between $x$ and $z$ that passes either through $y$ or through one of the two neighbors of $x$ along the boundary of $s$.
Thus, in this case $x$ is of degree at most six (and this degree is obtained when $x$ is adjacent to two pairs of right-angled triangles).

\subsection*{Computing $E$.}
By the above degree restriction, we have $|E|=O(p)$. To build $E$, we go over each vertex of $x\in V$ and look for the other points of $V$ that form a valid pair with $x$. By considering the above cases, we notice that if $x$ forms a valid pair with $y\in V$, then either $x$ and $y$ are on a common triangle of $S$ or the segment $xy$ has a slope of $\pm 1$ and no other points of $V$ on it.
To handle the former case, we simply go over every triangle in $S$ and add its three edges to~$E$.

For every line $\ell$ of slope 1 that contains points of $V$, we keep an array of the points of $V$ that are on $\ell$, sorted by their $x$-coordinate. There are $O(p)$ lines with a total of $O(p)$ points on them. Thus, the arrays can be built in $O(p\log p)$ time. We then go over every array and add an edge between every two adjacent points on it, with the following exception. If we get to a point $x \in \partial R$ on $\ell$, we check whether $\ell$ leaves $R$ in $x$ and if so do not add the edge that intersects the outside of~$R$.
We repeat the same process for lines with slope -1, which completes the construction of~$E$. Notice that this construction takes $O(p\log p)$ time.

By Theorem \ref{tilability}, $R$ is tileable if and only if there exists a height function $g: S \to \Z$ that satisfies \eqref{eq:condition} for every $x,y\in S$ with $(x,y)\in E$.
We now describe an algorithm for finding such a function $g$ (or stating that such a function does not exist) in $O(p \log p)$ time.
Specifically, out of the set of functions that satisfy the above condition we find the maximum function $h_{\max}$ defined in~\eqref{eq:hmax}.

\subsection*{Computing $g$.}
We begin the algorithm by initializing several variables. Let $h:\partial R \to \Z$ be a valid height function. The beginning of Section \ref{s:tileability} explains how to find such a function in $O(p)$ time (if such a function does not exist, we stop the algorithm and announce that $R$ is not tileable). Let $A$ be an array with a cell for every point $x\in S$, such that eventually we would have $A[x]=g(x)$ (that is, $A$ would describe $g$). We initially place ``N'' in each cell of $A$, to state that $g$ is currently undefined for the point that corresponds to the cell. Then, for every $x\in \partial R$ we set $A[x]=h(x)$.

Let $H$ be a heap with an element for every point of $S$ (a standard binary heap would suffice). For each $x\in S$ with $A[x]=\text{N}$, the key $H[x]$ of $x$ is the maximum integer value for $g(x)$ that does not violate \eqref{eq:condition} with points that already have a value in $A$. At first, we insert every point of $S$ to $H$ with key $H[x]=\infty$. Every time that we update a cell $A[x]$ (including during the above initialization of the points of $\partial R$ in $A$), we remove $x$ from $H$ and update the keys of each $y\in S$ that is adjacent to $x$ in $G$ (that is, for which $x \approx_S y$); specifically, for every such $y$ we set $H[y]= \min\ts\{ H[y], A[x]+\alpha (x,y) \}$. Notice that setting a value in a cell of $A$ leads to updating at most eight elements of $H$.

The main part of the algorithm consists of repeating the following process until the heap $H$ is empty and no cell of $A$ contains the undefined value N. Let $x$ be the point with the smallest key $k$ in $H$. We set $A[x]=k$, remove $x$ from $H$, and update the keys of points that are adjacent to $x$ in $G$ as described above. We then go over the vertices that are adjacent to $x$ in $G$ and already have values in $A$. For each such vertex $y$, we check whether $x$ and $y$ satisfy \eqref{eq:condition}. If not, then we stop the algorithm and announce that $R$ is not tileable.

If the above process ended since the heap $H$ is empty and no cell of $A$ contains the undefined value N, then we obtain a function $g$ that satisfies \eqref{eq:condition}. In this case, we announce that $R$ is tileable.

\subsection*{Correctness.}

To prove that the algorithm is correct, it suffices to prove that the function $g$ that the algorithm computes is indeed the maximum height function $h_{\max}$ from \eqref{eq:hmax} (although defined only on the points of $S$). By Theorem \ref{tilability}, $R$ is tileable if and only if such a function exists.

We first claim that for each $y\in S\setminus \partial R$, there exists a point $x\in S$ such that $x\approx_S y$ and $g(y)-g(x) =\alpha (x,y)$. Indeed, if no such $x$ exists then the algorithm would have assigned a larger value to $g(y)$ after removing $y$ from $H$. Since $\alpha(x,y)>0$ when $x\neq y$, we obtain that $g$ has no local minimum (with respect to the edges of $G$) outside of $\partial R$. Specifically, a straightforward induction on the (edge) distance of $x$ from $\partial R$ shows that for every $y$ in the interior of $R$ there is a path $(x_{1},\ldots,x_{n})$ in $G$ such that $y=x_{1}$, $x_{n} \in \partial R$, and $g(x_{i+1})-g(x_{i}) = \alpha (x_{i+1},x_{i})$ for every $1\le  i < n$.

We show that $g=h_{\max}$ on $S$ by induction on the number of values that the algorithm already set in $A$. For the induction basis, the claim follows by definition for every point of $\partial R$. For the induction step, assume that in the $i$-th iteration of the algorithm the point $y\in S \setminus \partial R$ is chosen, since it has a minimum key in $H$.
By the induction hypothesis, this key is
\[ H[y] \. = \. \min_{} \. \bigl(A[x] + \alpha(x,y)\bigr)
\. = \. \min_{} \, \bigl(h_{\max}(x)+\alpha(x,y)\bigr), \]
where the minima are over all $x$ s.t.~$(x,y)\in E$ and $A[x] \neq \text{N}$.

From above, there exists a path $(x_1,\ldots, x_n)$ in $G$ such that $y=x_{1}$, $x_{n} \in \partial R$, and $g(x_{i+1})-g(x_{i}) = \alpha (x_{i+1},x_{i})$ for every $1\le  i < n$. That is, $H[y] = h(x_n)+\alpha(x_n,y)$. By inspecting the definition of $h_{\max}$ in \eqref{eq:hmax}, we notice that to have $h_{\max}(y) = h(x_n)+\alpha(x_n,y)$ it remains to prove that there is no point $z \in \partial R$ such that $h(x_n)+\alpha(x_n,y) > h(z)+\alpha(z,y)$. A priori, this can only happen if the value $h(z)+\alpha(z,y)$ was not yet discovered by the algorithm since some geodesic path between $y$ and~$z$ contains a point $z'\in S$ that is still in the heap~$H$.

To see why the problematic scenario is impossible, notice that in the $i$-th iteration the key of every point $x$ that is still in $H$ is at least $h(x_n)+\alpha(x_n,y)$; otherwise we would have removed $x$ from $H$ before removing $y$. While some of these keys may be decreased in following steps of the algorithm, no key will be decreased to a value that is smaller than $h(x_n)+\alpha(x_n,y)+1$. Thus, when removing $z'$ from $H$, we have $H[z']\ge h(x_n)+\alpha(x_n,y)$. This in turn implies $h(z)+\alpha(z,y) \ge H(z')+\alpha(z',y) > h(x_n)+\alpha(x_n,y)$. That is, the problematic scenario cannot occur, and the correctness proof is complete.

\subsection*{Running time of the algorithm}

As already mentioned, obtaining $S$ requires $O(p\log p)$ time, building $G$ requires $O(p\log p)$ time, and $h$ is computed in $O(p)$ time.
Computing $g$ (that is, computing $A$) requires $O(p)$ steps. Every step involves a constant number of operations. Most of these operations require a constant time, except for removing the minimum element from the heap and updating the keys of at most eight other elements. Each such operation takes $O(\log n)$ time, so computing $g$ requires $O(p\log p)$ time.
\end{proof}

% We complete this section with a proof of Theorem \ref{t:main-inter}.
% As before, we repeat the statement of the theorem to help the reader.
\vspace{2mm}

% \noindent {\bf Theorem \ref{t:main-inter}} \emph{
% Let $\Ga$ be a simply connected region in the plane~$\zz^2$, and
% let $p = |\pt \Ga|$ be the perimeter of $\Ga$.  If $\Ga$ is tileable
% with dominos, the height function $h_{\max}$ of the maximal tiling $T_{\circ}$ of $R$ can be computed interactively
% with a preprocessing time $O(p \ts\log\mts p)$, and a
% query time $O(\log p \log\ts(\log  p))$.}

\begin{proof}[Proof of Theorem \ref{t:main-inter}. ]
We begin by describing the preprocessing step. In this step, we first  run the algorithm of Theorem \ref{t:main}, to obtain a subset $S\subset R$ of vertices of $O(p)$ interior-disjoint squares and triangles that cover $R$. We also obtain  the values of the maximum height function for the points of $S$. As stated in Theorem \ref{t:main}, this can be done in $O(p\log p)$ time. We then preprocess the subdivision of $R$ for point location queries (see e.g.~\cite{EGS}). Specifically, after a preprocessing time of $O(p)$, for any point $x\in R$ we can find the square or triangle that contains~$x$ in $O(\log p)$ time.

We now move to describe the query step, where we are given a query point $x\in R$. First, we consider the case where $x$ is not on the boundary of any square in the subdivision of $R$. By using the point location algorithm, we find the square that contains $x$ in $O(\log p)$ time. We then partition this square into four subsquares of equal size, and add the vertices of these subsquares into $S$. There are at most five new vertices, and we compute the height value of each in $O(\log p)$ time, as in the proof of Theorem \ref{t:main} (using the $O(p)$ arrays that we built in that proof). Out of the four subsquares, we find the one that contains $x$ (again, assuming that $x$ is not on the boundary of any of them) and subdivide it into four squares as before. We repeat this process until $x$ is surrounded by eight vertices of $S$, as in Figure \ref{fi:addedSquares}. We then have the height values of $x$ and of the eight vertices that surround it. By the conditions of Lemma \ref{le:CorrespondinghFunc} this implies the behavior of the maximum tiling of $R$ around $x$.

\begin{figure}[h]
\centering
\includegraphics[scale=0.45]{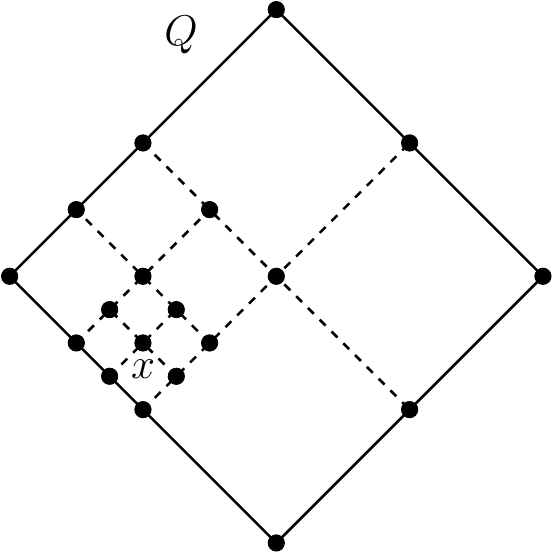}
\caption{The edges of the square $Q$ are solid, while the edges that are added in the subdivision steps of the query are dashed.}
\label{fi:addedSquares}
\end{figure}

If at some point during the above process $x$ is on the boundary of a square, we continue the subdivision process with two other points --- the one immediately above $x$ and the one immediately below $x$. Notice that such a split from $x$ into two other points can occur at most once, and that at the end of the process we still obtain the height values of $x$ and the eight vertices that surround it.

In summary, the query algorithm stops after $O(\log p)$ steps, each taking $O(\log p)$ time. Thus, handling a query requires $O(\log^2 p)$ time.
\end{proof}

\begin{rem}{\rm
We believe that with slight modifications the running time of the query can be improved to $O(\log p)$, but chose not to pursue this direction at this point.}
\end{rem}

\bigskip

\section{Lozenge tilings}\label{s:other}

\nin
The results of this paper can be extended to other lattices, and specifically to
\emph{lozenge tilings} in the triangular grid, which are dual to perfect matchings
in a \emph{hexagonal grid}.  In this section we present a brief outline for how to extend our result to the case of lozenge tilings.
Once again we follow Thurston, who defined the corresponding
height function in the original paper~\cite{Thu} (see also~\cite{Cha,R1}).

%\begin{figure}[hbt]
%\centering
%\includegraphics[scale=0.45]{lozenge}
%\caption{Maximal and minimal lozenge tiling.}
%\label{fi:lozenge}
%\end{figure}

\begin{figure}[h]
\centering
\includegraphics[scale=0.21]{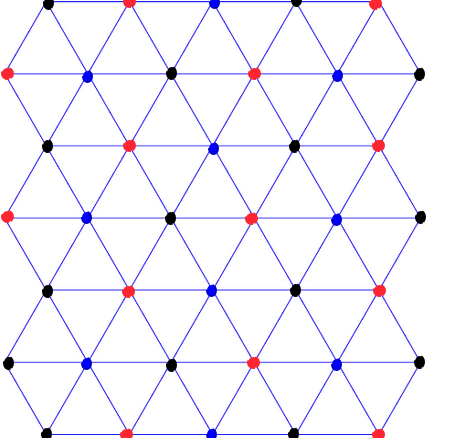}
\caption{Coloring the triangular grid with three colors.}
\label{fi:coloring}
\end{figure}

Let $\tT$ be the triangular grid with a fixed coloring of the vertices in black, red, and blue, such that every edge is adjacent to two vertices of different colors (see Figure \ref{fi:coloring}).
Let $v_1$ be a vector of length 1 in the positive direction of the $x$-axis, let $v_2$ be $v_1$ rotated counterclockwise by $120^\circ$, and let $v_3$ be $v_1$ rotated clockwise by $120^\circ$. Every point of $\tT$ can be written as $av_1+bv_2+cv_3$ (where $a,b,c$ are non-negative integers) in infinitely many ways. For example, the origin can be written as $(a,a,a)$ for every non-negative $a$. However, after adding the condition $\min\{a,b,c\}=0$, for every point $p$ of $\tT$ there is a unique way of writing $p$ as $av_1+bv_2+cv_3$. Using these unique values of $a,b,c$, we say that the coordinates of $p$ are $(a,b,c)$.

Let $R$ be a simply connected region in~$\tT$.
Similarly to the characterization of height functions for domino tilings in Lemma \ref{le:CorrespondinghFunc}, a function $h: R \to \zz$ is the height function of a lozenge tiling if and only if:
\begin{enumerate}[label={(\roman*)}]
\item For every two vertices $x,y \in R$, we have $h(x)-h(y) = 0 \mod 3$ if and only if $x$ and $y$ have the same color.
\item For every edge $(x ,y) \in \partial R$ with respective colors (black,red), (red,blue), or (blue,black), we have $h(x)-h(y)= 1$.
\item For every edge $(x,y)\in R$, we have $|h(x)-h(y)|\leq 2$.
\end{enumerate}

Also as in the case of domino tilings, for any $x\in R$ there exists a maximal height function of the plane $\alpha(x,\cdot)$ with $\alpha(x,x)=0$. Specifically, $\alpha (x,y)$ is defined as the sum of the three coordinates of  $y$ when considering $x$ as the origin. In the triangular grid, we say that a sequence of points $(x_{1},\ldots, x_{n})$ is a geodesic path if
\begin{itemize}
\item  For every $i < n$, we have $\| x_{i+1} - x_1 \|_{1} = \| x_{i} - x_1 \|_{1}$+1.
\item  For every $i < n$, the points  $x_{i}$ and $x_{i+1}$ are corners of a common $1\times 1$ triangle in $\tT$.
 \end{itemize}

Unlike the case of $\Z^2$, in the triangular grid geodesic paths do not necessarily minimize the number of edges. Moreover, when travelling along a geodesic path, we only move in the directions $v_1,v_2,v_3$ (since moving in one of the other three directions will result in a step of distance 2). Specifically, every geodesic path uses at most two of these three directions.
As before, for $x,y\in  \tT$ we denote by $G(x,y)$ the union of the geodesic paths between $x$ and~$y$.  It is not difficult to verify that $G(x,y)$ is always a parallelogram (possibly of width zero). Let $S$ be a set that contains $\partial R$ and any number of points from the interior of~$R$.
We write $x\approx_S y$ when $x,y \in S$ and $G(x,y)\setminus\{x,y\}$ is disjoint from~$S$. The following theorem is the lozenge tiling analogue of Theorem \ref{tilability}.

\begin{thm}
\label{tilability2}
Let $R$ be a simply connected region in $\tT$ that contains the origin and let $S \subset R $ be a set that contains $\partial R$.  Then $R$ is tileable if and only if there exists $g:S \to \Z$ such that $g=h$ on $\partial R$ and for every pair $x,y \in  \partial S$ with $x\approx_{S} y$ we have
\begin{equation} \label{eq:condition2}
-\alpha (y,x) \leq g(y) - g(x) \leq \alpha (x,y).
\end{equation}
\end{thm}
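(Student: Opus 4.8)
The plan is to mimic the proof of Theorem \ref{tilability} essentially verbatim, after first establishing the two structural facts about $\alpha(x,\cdot)$ in the triangular grid that play the role of the two ``key observations'' preceding Lemma \ref{l:necessary}. The first fact is that $\alpha(x,\cdot)$ is strictly increasing along any geodesic path: along a geodesic we only move in directions $v_1,v_2,v_3$, each such step increasing the sum of the three coordinates (relative to $x$) by exactly $1$, so $\alpha(x,x_{i+1})-\alpha(x,x_i)=1>0$. The second fact is that for an edge $(x_i,x_{i+1})$ of a geodesic path and any height function $h$ defined on both endpoints, the quantity $h(x_{i+1})-h(x_i)$ takes one of exactly two values, the larger of which equals $\alpha(x,x_{i+1})-\alpha(x,x_i)=1$; this follows from conditions (i)--(iii) of the triangular-grid height-function characterization exactly as in the $\Z^2$ case (the colors of $x_i$ and $x_{i+1}$ differ, so $h(x_{i+1})-h(x_i)\not\equiv0\bmod 3$, and $|h(x_{i+1})-h(x_i)|\le 2$, leaving the two values $\{1,-2\}$ or $\{2,-1\}$ according to the color pair). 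Consequently $h(x_{i+1})-h(x_i)\le \alpha(x,x_{i+1})-\alpha(x,x_i)$ for every height function $h$, and summing telescopes to $h(y)-h(x)\le\alpha(x,y)$ along any geodesic from $x$ to $y$. Additivity of $\alpha(x,\cdot)$ on geodesic paths is immediate from the fact that each step contributes $+1$ (equivalently, $\alpha(x,\cdot)$ is just the sum of coordinates based at $x$, which is additive).

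Granting these, the \textbf{necessity} direction is identical to that of Theorem \ref{tilability}: if $R$ has a tiling with corresponding height function $g:R\to\Z$, then $g=h$ on $\partial R$, and for any $x,y\in S$ with $x\approx_S y$ we pick a geodesic path $(x_1,\dots,x_n)$ in $R$ (it lies in $R$ because $G(x,y)$ is a parallelogram and $x\approx_S y$ only controls the interaction with $S$, not with $R$; one must note that a geodesic path in $R$ exists, which holds since $R$ is simply connected and $G(x,y)$ between two points of $R$ meets $R$ in a connected ``staircase'' region — this is the analogue of the fact used implicitly in the $\Z^2$ proof), telescope $g(y)-g(x)\le\sum(\alpha(x,x_{i+1})-\alpha(x,x_i))=\alpha(x,y)$, and symmetrically $g(x)-g(y)\le\alpha(y,x)$.

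For \textbf{sufficiency} I would follow the two-step structure of Theorem \ref{tilability}: first prove the lozenge analogue of Lemma \ref{l:necessary} (that $R$ is tileable iff for every $x,y\in\partial R$ with $x\sim_R y$ we have $-\alpha(y,x)\le h(y)-h(x)\le\alpha(x,y)$), by checking that $h_{\max}(y)=\min_{x\in\partial R,\,x\sim_R y}[h(x)+\alpha(x,y)]$ satisfies conditions (i)--(iii) of the triangular-grid characterization — condition (ii) via the same ``minimum is attained at $y$'' argument, condition (i) via the ``forcing mod $3$'' argument (here the modulus is $3$ rather than $4$, and one uses that the colors are consistent around $\partial R$), and condition (iii) via the geodesic-path-length argument, now noting that replacing an endpoint of a geodesic in $\tT$ by an adjacent vertex changes the coordinate sum by at most $2$, which gives $|h_{\max}(x)-h_{\max}(y)|\le 2$. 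Then the reduction from ``all pairs $x\sim_R y$ in $\partial R$'' to ``all pairs $x\approx_S y$ in $S$'' is an induction on $\|x-y\|_1$ identical to the one in Theorem \ref{tilability}: if $x\approx_S y$ we are done by hypothesis, otherwise a geodesic path from $x$ to $y$ meets $S\setminus\{x,y\}$ at some $x_i$, we split the geodesic at $x_i$, apply the induction hypothesis to the two shorter pieces, and add the resulting inequalities using additivity of $\alpha$ on geodesics, $\alpha(x,x_i)+\alpha(x_i,y)=\alpha(x,y)$.

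The main obstacle, and the only place where the triangular grid genuinely differs from $\Z^2$, is condition (iii) in the sufficiency argument: in $\Z^2$ a geodesic path between adjacent points has length differing by at most $1$, whereas in $\tT$ geodesics need not be shortest in edge count, and adjacency in $\tT$ includes edges in the three ``long'' directions as well. One must verify carefully that if $y$ is adjacent to an interior point $x$ with $h_{\max}(x)=h(x_i)+\alpha(x_i,x)$ for some $x_i\in\partial R$, then there is a geodesic from $x_i$ to $y$ within $R$ whose $\alpha$-length exceeds $\alpha(x_i,x)$ by at most $2$ — equivalently that the coordinate sums of $x$ and $y$ based at $x_i$ differ by at most $2$. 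This is a small case analysis over the six possible directions of the edge $xy$ and the relative position of $x_i$, together with the parallelogram structure of $G(x_i,\cdot)$ to ensure the path can be chosen inside $R$; everything else transfers mechanically, with $4\mapsto 3$ and $3\mapsto 2$ throughout.
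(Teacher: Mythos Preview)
Your proposal is correct and takes essentially the same approach as the paper, which simply states that the proof ``follows verbatim the proof of Theorem~\ref{tilability}'' and omits all details. You have faithfully spelled out how each step transfers to the triangular grid---the two key observations about $\alpha(x,\cdot)$, the analogue of Lemma~\ref{l:necessary}, the induction on $\|x-y\|_1$, and the constant replacements $4\mapsto 3$, $3\mapsto 2$---which is exactly what the paper's one-line proof intends.
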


The proof of Theorem \ref{tilability2} follows verbatim the proof of Theorem~\ref{tilability}.  We omit the details.
Similarly, we can use a variant of Theorem \ref{th:Subdivision} to find a subdivision of $R$ into $O(p)$ interior-disjoint triangles of various sizes. The main difference is that here we use equilateral triangles with side length $2^{i}$ instead of squares. Indeed, every such triangle can be subdivided into four interior-disjoint triangles of side length $2^{i-1}$. We set $S$ to consist of $\partial R$ together with the vertices of the triangles of the subdivision. For any $x \in S$, it is not difficult to show that at most six points $y\in S$ for which $x\approx_S y$ (the actual bound seems to be smaller than six, but this does not matter for our purpose).
Finally, by revising the algorithm of Theorem \ref{t:main}, we obtain the following theorem.

\begin{thm} \label{t:main2}
Let $\Ga$ be a simply connected region in the triangular grid of the plane, and
let $p = |\pt \Ga|$ be the perimeter of $\Ga$.
Then there exists an algorithm that decides tileability of~$\Ga$
in time $O(p \ts\log\mts p)$.
\end{thm}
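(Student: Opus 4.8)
The plan is to follow the proof of Theorem~\ref{t:main} essentially line by line, substituting the triangular-grid ingredients for their square-grid counterparts and invoking Theorem~\ref{tilability2} in place of Theorem~\ref{tilability}. First I would run the triangular variant of Theorem~\ref{th:Subdivision} sketched above to obtain, in $O(p\log p)$ time, a subdivision of $R$ into $O(p)$ interior-disjoint equilateral triangles whose side lengths are powers of two, together with the set $S$ consisting of $\partial R$ and the vertices of these triangles; here $|S|=O(p)$. I would then build a graph $G=(V,E)$ with $V=S$ and an edge $\{x,y\}$ exactly when $x\approx_S y$. By the degree bound stated above---every point of $S$ participates in at most six such pairs---we have $|E|=O(p)$. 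By Theorem~\ref{tilability2}, $R$ is tileable if and only if there is a function $g:S\to\Z$ with $g=h$ on $\partial R$ (where $h$ is the boundary height function, computable in $O(p)$ time as in Section~\ref{s:tileability}) satisfying \eqref{eq:condition2} for every edge of $G$.

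Next I would compute $E$ in $O(p\log p)$ time. As in the square case, if $x\approx_S y$ then either $x$ and $y$ are vertices of a common small triangle of the subdivision, or the segment $xy$ is a geodesic segment carrying no other point of $V$; in $\tT$ the latter forces $xy$ to be parallel to one of $v_1,v_2,v_3$. The common-triangle pairs are collected by scanning the $O(p)$ triangles and adding their edges. For the remaining pairs, for each of the three directions and each lattice line $\ell$ in that direction that carries points of $V$, I would keep an array of $V\cap\ell$ sorted along $\ell$; there are $O(p)$ such lines carrying $O(p)$ points in total, so all arrays are built in $O(p\log p)$ time, and adjacent entries yield the sought edges, with the usual exception that at a point $x\in\partial R$ one omits the edge leaving $R$.

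I would then compute $g$ as the maximum height function $h_{\max}$ of \eqref{eq:hmax} (with $\alpha$ the triangular-grid function) restricted to $S$, via the heap-based Dijkstra-type procedure of Theorem~\ref{t:main}: initialize $A[x]=h(x)$ for $x\in\partial R$ and $A[x]=\text{N}$ otherwise; maintain a heap keyed by the largest value still permitted by \eqref{eq:condition2} given the already-fixed neighbours; repeatedly extract the minimum key, fix that value, update the at most six affected keys, and check \eqref{eq:condition2} against the already-fixed neighbours, declaring $R$ untileable on the first violation. Correctness is exactly as in Theorem~\ref{t:main}: additivity of $\alpha$ on geodesic paths---which still holds in $\tT$ because $\alpha(x,y)$ is the sum of the coordinates of $y$ based at $x$, and sub-paths of geodesics are geodesics---guarantees that when a point is extracted its assigned value equals $h_{\max}$ there, and that no subsequent key decrease can fall below the current extraction threshold. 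Since $O(p)$ values are fixed, each touching $O(1)$ heap elements at $O(\log p)$ cost, this stage runs in $O(p\log p)$ time, giving the claimed bound.

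The step I expect to require the most care is the verification of the degree bound and of the ``no other point of $V$ on the geodesic'' characterization of edges, since triangular geodesic paths need not minimize the number of edges and $G(x,y)$ is a parallelogram rather than a (truncated) rectangle. One must check that whenever $G(x,y)\setminus\{x,y\}$ avoids $S$ the segment $xy$ is genuinely parallel to one of $v_1,v_2,v_3$ and is blocked by subdivision vertices in all other relevant directions, and that at any vertex of $S$ at most six unblocked directions survive; this reduces to a case analysis over how small triangles and fragments of larger triangles can meet at a vertex of $S$, paralleling Figures~\ref{fi:DegreeTriangle}--\ref{fi:DegreeSquare2}.
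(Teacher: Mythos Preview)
Your proposal is correct and follows essentially the same approach as the paper, which itself gives only a sketch: use a triangular variant of Theorem~\ref{th:Subdivision} with equilateral triangles of side $2^i$, take $S$ to be $\partial R$ together with the subdivision vertices, invoke the degree bound of six for $\approx_S$, and rerun the heap-based algorithm of Theorem~\ref{t:main} using Theorem~\ref{tilability2}. In fact you supply more detail than the paper does (e.g., the three sorted-line arrays along $v_1,v_2,v_3$ and the explicit appeal to additivity of $\alpha$ on geodesics), and you are right that the degree-bound case analysis is the only step requiring genuine care.
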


\bigskip

\section{Final remarks and open problems} \label{s:fin}

\subsection{}\label{ss:fin-match}
For general bipartite planar graphs, recent developments improve
the Hopcroft--Karp bound~\cite{HK} to nearly linear time.  First, the
existence of a perfect matching is equivalent to the \emph{circulation
problem}, where all white vertices have supply~1 and all black vertices have
demand~1.  It is known that the circulation problem in planar graphs can be
solved within the same time bound as the \emph{shortest path problem}
with negative weights on a related planar graph~\cite{MN}.  The latter
can be solved in time $O(n\log^2 n/\log\log n)$, see~\cite{MW}. This
almost matches Thurston's original $O(n\log n)$ bound in
Theorem~\ref{t:thurston}.

\subsection{}\label{ss:fin-oracle}
Our oracle model for the perfect matching is similar to other models
of sparse graph presentations, which are popular in the study of
graph properties of massive graphs (see e.g.~\cite{Gol1}).  The idea
is to give a sublinear size presentation of a perfect matching,
amenable to running further sublinear time algorithms;  see e.g.~\cite{RS}
for a primer on the subject.

\subsection{}\label{ss:fin-tile}
Thurston originally defined and studied height functions for the domino
tilings as in this paper and for the lozenge tilings in a triangular lattice.
Since then, many generalizations and variations have been discovered.
These include other rectangles in the plane~\cite{KK,Korn,R2}, other tiles
in the triangular lattice~\cite{R1}, rhombus tilings in higher dimension~\cite{LMN},
perfect matchings of more general graphs in the plane and other
surfaces~\cite{Cha,Ito,STCR}, and even infinite domino tilings~\cite{BFR}.
We refer to~\cite{Pak} for a (somewhat dated) survey of various tileability
applications of height functions and tiling groups.

On the complexity side, there are a number of $\NP$-completeness
results for the decision and counting problems for general regions with small tiles,
see e.g.~\cite{BNRR,MR}, and more recently for simply connected regions~\cite{PY1}.
In case of domino tilings, there are also $\SP$-completeness results for
$3$-dimensional regions~\cite{PY2,V2}.

The notion of height functions for domino and lozenge tilings has also made a
remarkable impact in Probability and MCMC studies (see e.g.~\cite{LRS,Ken}).

\subsection{}\label{ss:fin-cs}
The complexity of Thurston's algorithm has been investigated to a remarkable
degree in the Computational Geometry literature.  These include generalizations
to regions with holes~\cite{Thi}, parallel computing~\cite{Fou}, and more
general graphs~\cite{Cha}.

The idea behind our tileability criterion, stated in Lemma~\ref{l:necessary},
was first given in the third author's thesis~\cite{Tas}, in the context of
tromino tilings.  The criterion is especially surprising given the
fundamentally non-local property of the domino tileability, as elucidated
by the \emph{augmentablity problem} (see~\cite[$\S$11.3]{Korn}).

\subsection{}\label{ss:fin-other}
We believe that our approach can be further extended to a variety of
tiling problems which admit height functions, such as tilings with bars
(see~\cite{BNRR,KK,Tas}).  In a different direction, the heart of the
proof is the idea of \emph{scaling} represented by the squares which are
used heavily in Section~\ref{s:algorithm}.  It would be nice to see
this idea can be further developed.  Finally, if the boundary $\pt \Ga$ is given
by some kind of periodic conditions (cf.~\cite{Ken-boundary}),
one can perhaps further speed up the domino tileability testing.
Unfortunately, at the moment, we do not know how to formalize
this problem.

\vskip.3cm

\noindent
{\bf Acknowledgements.}  We are very grateful to Scott Garrabrant
and Yahav Nussbaum  for interesting discussions and helpful remarks.
The first author was partially supported by the~NSF.

\vskip.6cm

% \newpage

\end{document}